\newtheorem{theorem}{Theorem}
\newtheorem{proposition}[theorem]{Proposition}%
\newtheorem{lemma}[theorem]{Lemma}%
\newtheorem{corollary}[theorem]{Corollary}%
\newtheorem{remark}{Remark}%
\newcommand{\ZZ}{\mathbb{Z}}
\newcommand{\RR}{\mathbb{R}}
\newcommand{\NN}{\mathbb{N}}
\newcommand{\db}{{\boldsymbol{d}}}
\newcommand{\fb}{{\boldsymbol{f}}}
\newcommand{\hb}{{\boldsymbol{h}}}
\DeclareSIUnit\byte{B}  
\newcommand{\imagewidthincolumn}{0.95\hsize}
\newcommand{\imagelesswidthincolumn}{0.8\hsize}
\newcommand{\imagelinebreak}{\\\vspace*{0.5em}}
\begin{document}

\title[Haar Wavelets and Total Variation Regularization]{Haar Wavelets, Gradients
  and Approximate Total Variation Regularization} 


\author[1,2,3]{\fnm{Tomas} \sur{Sauer}}\email{tomas.sauer@uni-passau.de} 
\equalcont{All authors have contributed equally.}

\author*[3]{\fnm{Andreas Michael}
  \sur{Stock}}\email{stock@forwiss.uni-passau.de} 
\equalcont{All authors have contributed equally.} 

\affil*[1]{\orgdiv{Chair of Mathematical Image Processing},
  \orgname{University of Passau}, \orgaddress{\street{Innstr. 43}, \city{Passau}, \postcode{94032}, \country{Germany}}}

\affil[2]{\orgdiv{Development Center X-ray Technology},
  \orgname{Fraunhofer Institute of Integrated Circuits},
  \orgaddress{\street{Innstr. 43}, \city{Passau}, \postcode{94032},
    \country{Germany}}} 

\affil[3]{\orgdiv{FORWISS},
  \orgname{University of Passau}, \orgaddress{\street{Innstr. 43},
    \city{Passau}, \postcode{94032}, \country{Germany}}}


\abstract{Image denoising by means of  total variation (TV) regularization
  is still a standard procedure. For very large images, especially
  three-dimensional voxel datasets, however, this can be computationally
  infeasible. 
  We show how this \emph{TV regularization} can be approximately
  performed even in 
  arbitrary dimensions by applying
  appropriate shrinkage to selected and properly weighted Haar wavelet
  coefficients, all of which depends even on the dimensionality of the
  data. 
  Our approach acts entirely on the wavelet coefficients which
  represent the compressed image, and is
  therefore suited for the application on large three-dimensional
  images represented in the Haar wavelet basis, e.g., volumes from
  computed tomography. }


\keywords{Haar wavelets, shrinkage, total variation, denoising}


\pacs[MSC Classification]{68Q25, 68R10, 68U05}

\maketitle

\section{Introduction}
\label{sec:Introduction}
Modern imaging techniques produce larger and larger images that
need to be processed and analyzed. This is particularly evident in
industrial computed tomography (CT) where huge datasets of
one terabyte and more are 
generated on an almost routine basis by either scanning very large
objects~\cite{salamon19:_xxl_ct_elect_vehic} or working with a very
high resolution. Even on modern consumer hardware, such \emph{bigtures}
cannot be handled  without substantial compression. A natural approach
for that purpose is to use well-established tensor product wavelet
methods~\cite{mallat09:_wavel_tour_signal_proces}.
For two-dimensional images, wavelets have become part
of the JPEG2000 standard~\cite{christopoulos2000jpeg2000}, and they
have also been applied successfully for the compression of tomographic
volume data recently~\cite{sauerEtAl19:_compressed_computing,stockEtAlSauer20:_edge_ct}.

\begin{figure}
  \begin{center}
    \framebox{\includegraphics[width=\imagewidthincolumn]{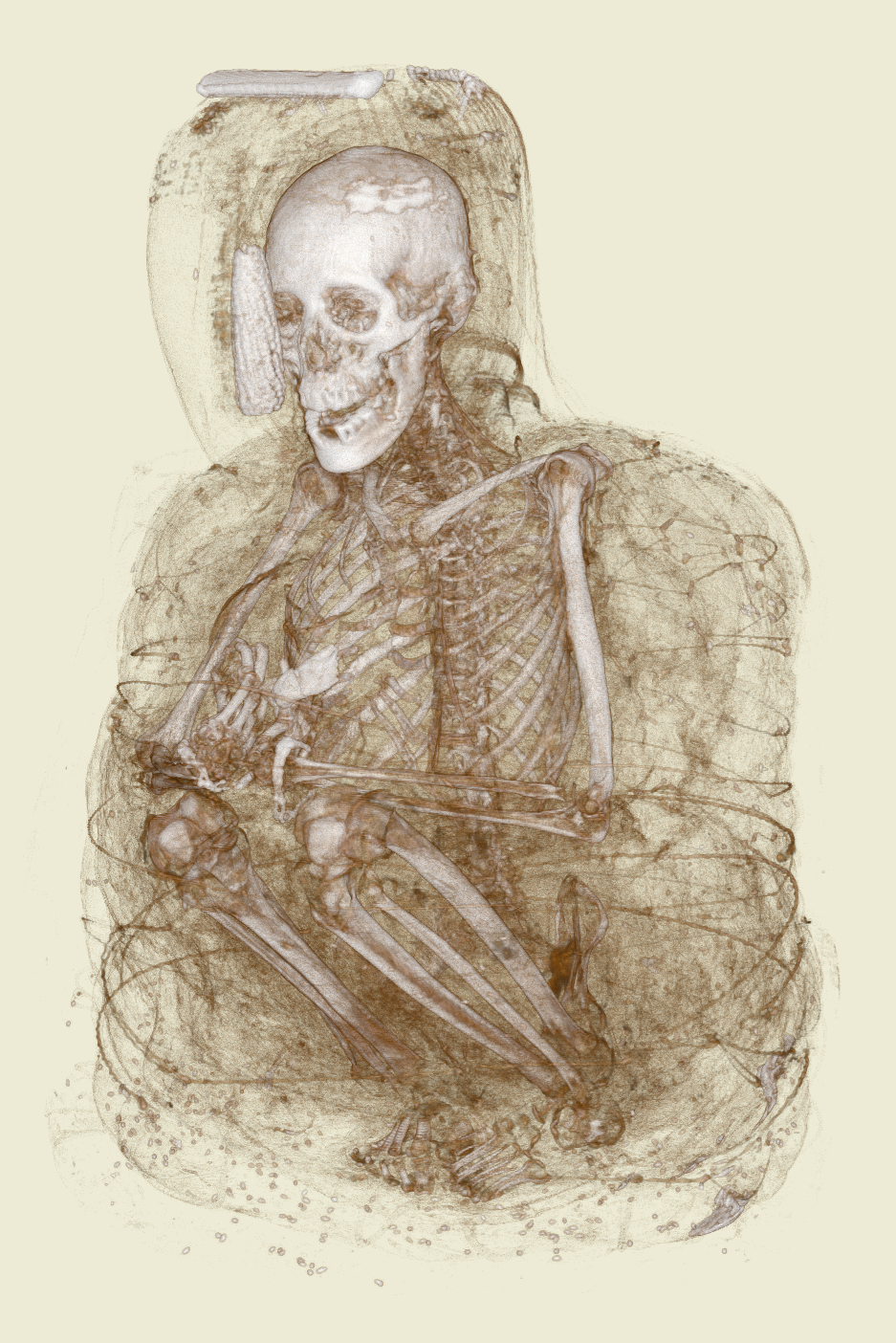}}
  \end{center}
    \caption{Overview rendering of the complete mummy dataset at a coarse resolution. Image: Fraunhofer~IIS/Christoph Heinzl\label{fig:mummyFull}}
\end{figure}
\begin{figure}
  \begin{center}
    \framebox{\includegraphics[width=\imagewidthincolumn]{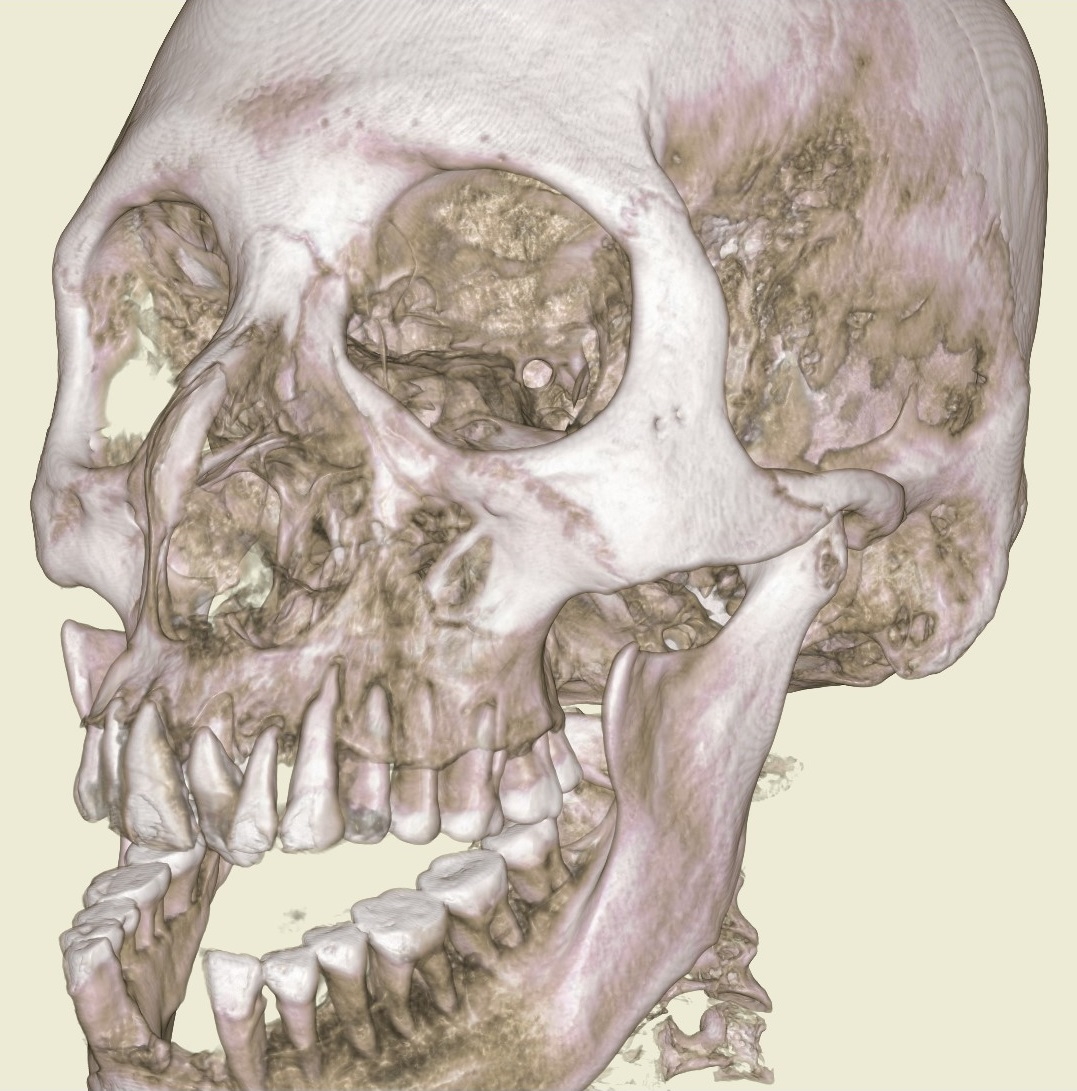}}
  \end{center}
    \caption{Rendering of full-resolution details of the mummy skull and teeth.
      All levels of detail are obtained from the locally decompressed wavelet
      dataset only. Image: Fraunhofer~IIS/Christoph Heinzl\label{fig:mummyTeeth}}
\end{figure}
To illustrate the type of objects that we are concerned with, we refer
to the CT scan of a Peruvian mummy, located at the Linden-Museum in Stuttgart,
that was performed at the Fraunhofer~IIS Development Center X-ray Technology
(EZRT), see Fig.~\ref{fig:mummyFull} and~\ref{fig:mummyTeeth}.
The size of the dataset is roughly \SI{970}{\giga\byte} and in order to handle
and visualize it on consumer hardware, it has been reduced to about
\SI{30}{\giga\byte} by 
the wavelet compression method from \cite{stockEtAlSauer20:_edge_ct}.
While high-resolution regions of interest (ROIs) of the scan can be obtained
on-the-fly, the full resolution image is not tractable with reasonable effort
and this necessitates algorithms that work exclusively on the wavelet coefficients.

The simplest choice for selecting a wavelet is the
\emph{Haar wavelet}~\cite{haar10:_zur_theor_funkt}
and its associated multiresolution analysis generated by
characteristic functions of the unit interval and their dyadic
refinement properties. Haar wavelets have the advantages of being
compactly supported, orthonormal and symmetric, and they are the only
wavelet system with these properties~\cite{Meyer93}. Moreover, Haar
wavelets have minimal support among all discrete multiresolution
systems and thus provide optimal localization. Fortunately, they can
also be implemented in a very efficient way, allowing for fast
decomposition and on-the-fly reconstruction of images. This by itself
makes them interesting and useful despite their well-known drawbacks
like a lack of smoothness and vanishing moments.

In this paper we show that and how Haar wavelets can be used for gradient
estimation and an approximate total variation (TV) denoising directly
on the wavelet coefficients without any need to reconstruct the full image.
This is a fundamental requirement for handling very large images, especially
in 3D. By this direct and computationally cheap manipulation of wavelet
coefficients, TV denoising can even be integrated into an almost real-time
visualization pipeline.

Moreover, the proofs will show that the approximation of the TV norm
is a particular property of
tensor product Haar wavelets: the fact that they have only \emph{one
vanishing moment} is responsible for their ability to approximate the
gradient on a grid. Their symmetry and analytic expression allow us
to determine the explicit level-dependent renormalization coefficients that
guarantee that direction and length of the gradients are met
accurately. Altogether, this leads to a novel shrinkage scheme, where
vectors consisting of a subset of the wavelet coefficients are
thresholded with respect to their length after a proper dimension-dependent
and level-dependent renormalization.

The paper is organized as follows: after briefly discussing some
related work in Section~\ref{sec:RelWork}, we recall
the definition of Haar wavelets in Section~\ref{sec:Haar}, set up the
notation for arbitrary dimensions and derive a saturation result for
the wavelet coefficients depending on their type, more precisely, on
the distribution of 
scaling and wavelet components in the tensor product
function. Section~\ref{sec:TVApprox} shows how the TV norm of a
function can be approximated by a properly renormalized subset of
wavelet coefficients and gives explicit error estimates for this
approximation. How TV denoising based on the ROF functional can be
done directly on the wavelet coefficients is demonstrated in
Section~\ref{sec:TVReg}. Finally, in Section~\ref{sec:Applications} we
give some example applications of the method on CT datasets.

\section{Related work}
\label{sec:RelWork}

It has long been observed that soft thresholding of (especially Haar)
wavelet
coefficients is a useful tool for data denoising. A careful
mathematical analysis and discussion of the relationship 
between Haar 
wavelet shrinkage and TV regularization has been given in~\cite{steidl02:_relat_soft_wavel_shrin_total_variat_denois}
and~\cite{welk08:_local}, starting from the study of diffusion
processes. More precisely, the authors showed that their diffusion-based
process of denoising has \emph{strong connections} to Haar wavelet
shrinkage. Though this approach is likely superior to ours for the goal of
denoising of two-dimensional images, it is not applicable to our needs as it
relies on an iterative diffusion process of the complete image that is
not practically feasible for gigavoxel datasets.

The geometric meaning of Haar wavelet coefficients has also been used 
implicitly in~\cite{reisenhofer18:_haar_wavel_based_percep_simil} to determine
similarity indices which are essentially based on the role of a subset
of the Haar wavelet coefficients as approximations of gradient
vectors. This indeed might be carried over to three dimensions and
could be directly applied for quality measurements between voxel datasets.

Bounded variation in the context of piecewise constant functions was
studied in depth in~\cite{cohen99:_nonlin_approx_space_r} where it was
also shown that approximate solutions of the TV denoising problem can
be obtained by suitable thresholding of a Haar wavelet
decomposition. While their results rely on significantly weaker
assumptions, they are developed
only for the bivariate case where the
dimension-dependent renormalization of the approximative gradients
with respect to the level does not appear, see Remark~\ref{remark:normalization}.

\section{Haar wavelets}
\label{sec:Haar}
To set up notation, we begin by recalling the well-known concept of tensor
product \emph{Haar wavelets} on~$\RR^s$, where~$s\in\NN$ stands for the number
of variables. We denote the characteristic function on an interval~$I\subset\RR$
by~$\chi_I$. In the one-dimensional case, the Haar wavelets are based on the
\emph{scaling function}~$\psi_0$ and the \emph{wavelet}~$\psi_1$, defined as
\begin{equation}
  \label{eq:HaarWavDef}
  \psi_0 := \chi_{[0,1]}, \qquad \psi_1 :=
  \chi_{[0,\frac12]} - \chi_{[\frac12,1]}, 
\end{equation}
cf.~\cite{mallat09:_wavel_tour_signal_proces}. Since
$\psi_{0,k} := \psi_0 (\cdot - k)$ and the normalized
wavelets $\psi_{1,k}^n := 2^{n/2} \psi_1 \left( 2^n \cdot - k
\right)$, $k \in \ZZ$, $n \in \NN_0$, form an orthonormal basis of
$L_2 (\RR)$, any function $f \in L_2 (\RR)$ can be expressed, for $k
\in \ZZ$ and  $n \in \NN_0$, in terms
of its scaling coefficients 
\begin{equation}
c_k (f) := \int_\RR f(t) \psi_0 (t-k) \, dt,
\end{equation}
and wavelet coefficients
\begin{equation}
d_k^n (f) := 2^{n/2} \int_\RR f(t) \psi_1 ( 2^n t-k) \, dt,
\end{equation}
via
\begin{equation}
  \label{eq:WaveletDecomp1d}
  f = \sum_{k \in \ZZ} c_k (f) \, \psi_{0,k} + \sum_{n=0}^\infty \sum_{k
    \in \ZZ} d_k^n (f) \, \psi_{1,k}^n,
\end{equation}
and the wavelet coefficients of different levels give rise to a
\emph{multiresolution analysis}.

The extension to~$s$ variables by means of tensorization is
straightforward. Let $\theta \in \{0,1\}^s$ be the index for a basis
function and define, for $x = (x_1,\dots,x_s) \in \RR^s$,
\begin{equation}
  \label{eq:TensorprodHaar}
  \psi_\theta (x) := \prod_{j=1}^s \psi_{\theta_j} (x_j), \qquad
  \theta \in \{0,1\}^s \setminus \{ 0 \}.
\end{equation}
We call~$\psi_0 = \chi_{[0,1]^s}$ the \emph{scaling function} while
all other functions~$\psi_\theta$, $\theta \in \{0,1\}^s \setminus \{
0 \}$, are \emph{wavelets}, yielding $2^s - 1$ wavelet functions at each
level. With the scaling coefficients
\begin{equation}
c_\alpha (f) = \int_{\RR^s} f(t) \psi_0 (t-\alpha) \, dt, \qquad
\alpha \in \ZZ^s,
\end{equation}
and the wavelet coefficients
\begin{equation}
d_{\theta,\alpha}^n (f) = 2^{ns/2} \int_{\RR^s} f(t) \psi_\theta (2^n
t-\alpha) \, dt
\end{equation}
with indices $\alpha \in \ZZ^s$ and $n \in \NN_0$, we obtain the orthogonal
representation
\begin{equation}
  \label{eq:WaveletDecomp}
  f = \sum_{\alpha \in \ZZ^s} c_\alpha (f) \, \psi_{0,\alpha} +
  \sum_{n=0}^\infty \sum_{\theta \neq 0} \sum_{\alpha \in \ZZ^s}
  d_{\theta,\alpha}^n (f) \, \psi_{\theta,\alpha}^n
\end{equation}
with the normalized wavelet functions
\begin{equation}
  \label{eq:WaveletDef}
  \psi_{\theta,\alpha}^n := 2^{ns/2} \psi_\theta (2^n \cdot - \alpha). 
\end{equation}
Of course, in applications one usually does not work with the infinite
series, but only with a finite sum of wavelet levels up to some
maximal level~$n_1$.

\subsection{Wavelet moments}
\label{sec:HaarMoments}

As a first auxiliary result, we compute moments of the wavelets
$\psi_\theta (\cdot - \alpha)$ with respect to the polynomials $(\cdot +
\frac12 \epsilon)^\gamma$, $|\gamma| \le |\theta|$, where ${\epsilon :=
(1,\dots,1) \in \ZZ^s}$ and where $|\theta|$ denotes the
length of the multiindex~$\theta$. 
The scaled and shifted wavelets have the center
of their support at the points
\begin{equation}
  \label{eq:xalphan}
  x_\alpha^n := 2^{-n} \left( \alpha + \frac12 \epsilon \right).
\end{equation}
Note that these points form a set of non-interlacing grids in~$\RR^s$.
Indeed, if there were $n < n'$ and $\alpha, \alpha' \in
\ZZ^s$ such that $x_\alpha^n = x_{\alpha'}^{n'}$, then this would mean
that
$
2^{n'-n} \left( \alpha + \frac12 \epsilon \right) = \alpha' + \frac12
\epsilon
$
which is impossible since the left hand side is a point in~$\ZZ^s$
while the one on the right hand side lies in the shifted grid~$\ZZ^s +
\frac12 \epsilon$. The moments of monomials centered at such
midpoints with respect to the Haar wavelets have a simple explicit
form which we record first.

\begin{lemma}
  \label{L:gammathetMoment}
  For $\theta \in \{0,1\}^s$ and $|\gamma| \le |\theta|$ one has
  \begin{eqnarray}
    \nonumber
    \lefteqn{\int_{\RR^s} \left( x - x_\alpha^n \right)^\gamma \,
    \psi_{\theta,\alpha}^n (x) \, dx} \\
    \label{eq:gammathetMoment}
    & = & (-1)^{|\theta|}
    2^{-(n+2)|\theta| - ns/2} \, \delta_{\gamma,\theta}.
  \end{eqnarray}
\end{lemma}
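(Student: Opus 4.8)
The plan is to reduce the $s$-dimensional integral to a product of one-dimensional integrals via the tensor-product structure, and then compute the one-dimensional moments explicitly by elementary integration. First I would substitute $x = 2^{-n}(t + \alpha)$, so that $dx = 2^{-ns}\,dt$ and $x - x_\alpha^n = 2^{-n}(t - \tfrac12\epsilon)$. Using $\psi_{\theta,\alpha}^n(x) = 2^{ns/2}\psi_\theta(2^n x - \alpha) = 2^{ns/2}\psi_\theta(t)$, the left-hand side of \eqref{eq:gammathetMoment} becomes
\begin{equation*}
  2^{-n|\gamma| - ns/2} \int_{\RR^s} \left( t - \tfrac12\epsilon \right)^\gamma \psi_\theta(t)\, dt
  = 2^{-n|\gamma| - ns/2} \prod_{j=1}^s \int_0^1 \left( t_j - \tfrac12 \right)^{\gamma_j} \psi_{\theta_j}(t_j)\, dt_j,
\end{equation*}
where the factorization uses \eqref{eq:TensorprodHaar} and Fubini.

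Next I would evaluate the scalar integrals $m(\theta_j,\gamma_j) := \int_0^1 (t-\tfrac12)^{\gamma_j}\psi_{\theta_j}(t)\,dt$ for the four relevant cases. When $\theta_j = 0$: $\psi_0 = \chi_{[0,1]}$ and the integrand is an odd power of $t - \tfrac12$ about the midpoint unless $\gamma_j = 0$, so $m(0,\gamma_j) = \delta_{\gamma_j,0}$. When $\theta_j = 1$: $\psi_1 = \chi_{[0,1/2]} - \chi_{[1/2,1]}$ is odd about $t = \tfrac12$, so the integral of an even power of $t-\tfrac12$ vanishes; in particular $m(1,0) = 0$, while for $\gamma_j = 1$ a direct computation gives $m(1,1) = \int_0^{1/2}(t-\tfrac12)\,dt - \int_{1/2}^1(t-\tfrac12)\,dt = -\tfrac14 = -2^{-2}$. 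Since $|\gamma| \le |\theta|$, in each coordinate $\gamma_j \in \{0,1\}$, so these cases are exhaustive, and one reads off $m(\theta_j,\gamma_j) = (-1)^{\theta_j} 2^{-2\theta_j}\,\delta_{\gamma_j,\theta_j}$.

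Taking the product over $j$ yields $\prod_j m(\theta_j,\gamma_j) = (-1)^{|\theta|} 2^{-2|\theta|} \prod_j \delta_{\gamma_j,\theta_j} = (-1)^{|\theta|} 2^{-2|\theta|}\,\delta_{\gamma,\theta}$, and the overall prefactor $2^{-n|\gamma|-ns/2}$ combines with $\delta_{\gamma,\theta}$ (which forces $|\gamma| = |\theta|$ wherever the term is nonzero) to give $2^{-n|\theta| - 2|\theta| - ns/2} = 2^{-(n+2)|\theta| - ns/2}$, matching \eqref{eq:gammathetMoment}. The argument is essentially all bookkeeping; the one point requiring a little care is the interplay between the constraint $|\gamma|\le|\theta|$ and the claimed Kronecker delta — one must check that whenever some $\gamma_j < \theta_j$ the contribution vanishes (it does, via $m(1,0)=0$), and that the case $\gamma_j > \theta_j$, i.e. $\gamma_j = 1$, $\theta_j = 0$, is excluded by $|\gamma|\le|\theta|$ together with nonnegativity of the coordinates, so it never needs to be evaluated. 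I expect no genuine obstacle here; the only mild subtlety is ensuring the substitution and the normalization powers of $2$ are tracked correctly.
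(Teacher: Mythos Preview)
Your approach---change variables to reduce to level $0$, then factor via the tensor product and evaluate the four univariate moments---is essentially the paper's own proof, with the added substitution making the power-of-$2$ bookkeeping a bit cleaner.

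There is, however, a genuine gap in the case analysis. The claim ``since $|\gamma| \le |\theta|$, in each coordinate $\gamma_j \in \{0,1\}$'' is false: for instance $s=3$, $\theta=(1,1,0)$, $\gamma=(2,0,0)$ satisfies $|\gamma| = 2 = |\theta|$ but $\gamma_1 = 2$. Consequently your formula $m(0,\gamma_j)=\delta_{\gamma_j,0}$ is not valid in general (e.g.\ $m(0,2)=\int_0^1(t-\tfrac12)^2\,dt=\tfrac1{12}\neq 0$). Likewise, the assertion that ``$\gamma_j > \theta_j$ is excluded by $|\gamma|\le|\theta|$'' fails already for $s=2$, $\theta=(1,0)$, $\gamma=(0,1)$.

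The fix is easy and is in fact the paper's argument, which you almost state in your closing paragraph: if $\gamma\neq\theta$ and $|\gamma|\le|\theta|$, then it cannot happen that $\gamma_j\ge\theta_j$ for all $j$ (otherwise $|\gamma|\ge|\theta|$ with equality forcing $\gamma=\theta$), so there is some $k$ with $\gamma_k<\theta_k$; since $\theta_k\in\{0,1\}$ this means $\gamma_k=0$, $\theta_k=1$, and the factor $m(1,0)=0$ annihilates the product regardless of the values of $m(\theta_j,\gamma_j)$ at other coordinates. With this pigeonhole step in place of the incorrect claims, your proof goes through.
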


\begin{proof}
  The integral in~\eqref{eq:gammathetMoment} can be written as
  \begin{eqnarray}
    \nonumber
    \lefteqn{\int_{\RR^s} \left( x - x_\alpha^n \right)^\gamma \,
    \psi_{\theta,\alpha}^n (x) \, dx} \\
    \nonumber
    & = & \prod_{j=1}^s
          \int_{\RR} \left( x_j - 2^{-n} \left(\alpha_j + \frac12 \right)
          \right)^{\gamma_j} \, \\
    & & \times \psi_{\theta_j,\alpha_j}^n (x_j) \,
          dx_j.
    \label{eq:gammathetMomentPf1}
  \end{eqnarray}
  We first note that if~$|\gamma| \le |\theta|$ and~$\gamma \neq
  \theta$ then there exists $k \in \{1,\dots,s\}$ such that $\gamma_k
  = 0$ and $\theta_k = 1$ and therefore
  \begin{equation}
  \int_{\RR} \left( x_j - x_\alpha^n \right)^{0} \,
  \psi_{1,\alpha_j}^n (x_j) \, dx_j = 0.
  \end{equation}
  Hence, one factor in~\eqref{eq:gammathetMomentPf1} vanishes and
  therefore the whole product. In consequence, the integral
  in~\eqref{eq:gammathetMoment} is nonzero only for~$\gamma = \theta$. To
  compute the value of the integral, we note that by shift invariance
  we can restrict ourselves to the case~$\alpha=0$. In the univariate case
  we note that 
  \begin{equation}
  \int_\RR \left( x - 2^{-n-1}
  \right)^{0} \, \psi_{0,0}^n  (x) \, dx
  = 2^{-n/2},
  \end{equation}
  as well as
  \begin{equation}
  \int_\RR \left( x - 2^{-n-1}
  \right) \, \psi_{1,0}^n (x) \, dx
  = -2^{-\frac32 n-2},
  \end{equation}
  so that the integral in~\eqref{eq:gammathetMoment} takes on the
  value
  \begin{eqnarray}
  \lefteqn{\hspace*{-2em}\left(\prod_{\substack{j\in\{1,\ldots,s\},\\\theta_j = 0}} 2^{-n/2} \right) \left( \prod_{\substack{j\in\{1,\ldots,s\},\\\theta_j = 1}}
    \left(-2^{-\frac32 n-2}\right)\right)} \nonumber \\
    & = & (-1)^{|\theta|} 2^{-\frac{n}2 (s-|\theta|)} \, 2^{\left(
          -\frac32 n - 2\right) |\theta|} \nonumber \\
    & = & (-1)^{|\theta|} 2^{-(n+2)|\theta| - ns/2},
  \end{eqnarray}
  whenever $\gamma = \theta$.
\end{proof}

\subsection{Limits of coefficients}
\label{sec:CoeffLim}

The simple observation of Lemma~\ref{L:gammathetMoment} allows us to
determine the limits of Haar wavelet coefficients for sufficiently smooth
functions and their natural rate of decay.

\begin{theorem}
  \label{T:HaarCoeffLim}
  For $\theta \in \{0,1\}^s$ suppose that $f \in C^{|\theta|+1}
  \left(\RR^s\right)$. Then, for any $\alpha \in \ZZ^s$,
  \begin{equation}
    \label{eq:HaarCoeffLim}
    \lim_{n \to \infty} \left| \kappa_{n,|\theta|} \,
      d_{\theta,\alpha}^n (f) - \frac{D^\theta f \left(x_\alpha^n
        \right)}{\theta!} \right| = 0,
  \end{equation}
  where $\kappa_{n,|\theta|} := (-1)^{|\theta|} 2^{(n+2)|\theta| +
    ns/2}$.
\end{theorem}

\begin{proof}
  For~$n \in \NN_0$, $\alpha \in \ZZ^s$, we consider the
  $(|\theta|+1)$-st Taylor expansion of~$f$ at~$x_\alpha$, to find that
  for any $x \in x_\alpha + \left[ -\frac12,\frac12 \right]^s$,
  \begin{eqnarray}
    f(x)
    & = & \sum_{|\gamma| \le |\theta|} \frac{D^\gamma
          f(x_\alpha^n)}{\gamma!} \, \left( x-x_\alpha^n
          \right)^{\gamma} \nonumber \\
    & & +
          \sum_{|\gamma| = |\theta|+1} \frac{D^\gamma f(\xi)}{\gamma!} \left(
          x-x_\alpha^n \right)^\gamma,  
  \end{eqnarray}
  where $\xi = \xi(x) \in x_\alpha^{n} + {2^{-n}} \left[
    -\frac12,\frac12 \right]^s$. Now, 
  by Lemma~\ref{L:gammathetMoment},
  \begin{eqnarray}
    \lefteqn{d_{\theta,\alpha}^n (f)} \nonumber\\
    & = & \int_{\RR^s} f(x) \, \psi_{\theta,\alpha}^n (x) \, dx \nonumber\\
    & = & (-1)^{|\theta|} 2^{-(n+2)|\theta| - ns/2} \, \frac{D^\theta f
          (x_\alpha^n)}{\theta!} \nonumber\\
    & & + \sum_{|\gamma| = |\theta|+1}
          \int_{\RR^s} \frac{D^\gamma f \left( \xi (x) \right)}{\gamma!} \left(
          x-x_\alpha^n \right)^\gamma \, \nonumber\\
    & & \qquad\qquad\qquad \times \psi_{\theta,\alpha}^n (x) \, dx.
  \end{eqnarray}
  The estimate
  \begin{eqnarray}
    \lefteqn{\left| \int_{\RR^s} \frac{D^\gamma f \left( \xi (x)
    \right)}{\gamma!} \left(x-x_\alpha^n \right)^\gamma \,
    \psi_{\theta,\alpha}^n (x) \, dx \right|} \nonumber\\
    & \le & \max_{x \in x_\alpha^{n} + 2^{-n} \left[ -\frac12,\frac12 \right]^s}
            \left| \frac{D^\gamma f(x)}{\gamma!} \right| \nonumber\\
    & & \times \; 2^{n/2} 
            \int_{ x_\alpha^{n} + 2^{-n} \left[ -\frac12,\frac12 \right]^s} \left|
            x-x_\alpha \right|^\gamma \, dx \nonumber\\
    & = & 2^{-(n+1)(|\theta|+1)-ns/2} \nonumber\\
    & & \times \max_{x \in x_\alpha^{n} + 2^{-n}
          \left[ -\frac12,\frac12 \right]^s} \left| \frac{D^\gamma
          f(x)}{\gamma!} \right|
  \end{eqnarray}
  holds for any $\gamma$ with $|\gamma| = |\theta|+1$, and immediately
  yields that 
  \begin{eqnarray}
    \nonumber    
    \lefteqn{\left| \kappa_{n,|\theta|}
    d_{\theta,\alpha}^n (f) - \frac{D^\theta f \left( x_\alpha^n
    \right)}{\theta!} \right|} \\
    \nonumber
    & \le & {|\theta|+s \choose s-1} 2^{|\theta|-n-1} \\
    \label{eq:THaarCoeffLimPf1}
    & & \times \max_{|\gamma| =
      |\theta|+1}\ \max_{x \in x_\alpha^{n} + 2^{-n}
      \left[ -\frac12,\frac12 \right]^s} \left| \frac{D^\gamma
        f(x)}{\gamma!} \right|,\quad
  \end{eqnarray}
  whose right hand side indeed tends to zero as $\mathcal{O}(2^{-n})$ for $n
  \to \infty$.
\end{proof}

If the $(|\theta|+1)$-st derivative of~$f$ is globally bounded in the
sense that
\begin{equation}
\left\| D^{|\theta|+1} f \right\|_\infty := \sup_{x \in \RR^s} \max_{|\gamma| =
  |\theta|+1} \left| \frac{D^\gamma f (x)}{\gamma!} \right| < \infty,
\end{equation}
then~\eqref{eq:THaarCoeffLimPf1} holds independently of~$\alpha$, and
we get the following improvement of Theorem~\ref{T:HaarCoeffLim}.

\begin{corollary}
  \label{C:HaarCoeffLimGlob}
  If $f \in C^{s+1}\left(\RR^s\right)$ and $\| D^k f \|_\infty < \infty$,
  $k=1,\dots,s+1$, then, for any $\theta \in \{0,1\}^s$,
  \begin{equation}
    \label{eq:HaarCoeffLimGlob}
    \lim_{n \to \infty} \sup_{\alpha \in \ZZ^s} \left|
      \kappa_{n,|\theta|} \, d_{\theta,\alpha}^n 
      (f) - \frac{D^\theta f \left( x_\alpha^n \right)}{\theta!} \right|
    = 0.
  \end{equation}  
\end{corollary}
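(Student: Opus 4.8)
The plan is to reuse, essentially verbatim, the estimate obtained in the proof of Theorem~\ref{T:HaarCoeffLim}, and then to observe that under the stronger global hypotheses its right-hand side no longer depends on~$\alpha$. Recall that the proof of Theorem~\ref{T:HaarCoeffLim} produced, for every $\theta \in \{0,1\}^s$ with $f \in C^{|\theta|+1}(\RR^s)$, the bound~\eqref{eq:THaarCoeffLimPf1}, whose right-hand side is
$$
{|\theta|+s \choose s-1}\, 2^{|\theta|-n-1}\, \max_{|\gamma| = |\theta|+1}\; \max_{x \in x_\alpha + 2^{-n} \left[ -\frac12,\frac12 \right]^s} \left| \frac{D^\gamma f(x)}{\gamma!} \right| .
$$
Since $\theta \in \{0,1\}^s$ forces $|\theta|+1 \le s+1$, the hypothesis $\| D^k f \|_\infty < \infty$ for $k = 1,\dots,s+1$ guarantees in particular that $\| D^{|\theta|+1} f \|_\infty < \infty$, and by the very definition of this quantity the inner double maximum above is bounded by $\| D^{|\theta|+1} f \|_\infty$ for every~$n \in \NN_0$ and every~$\alpha \in \ZZ^s$.

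First I would substitute this uniform bound into~\eqref{eq:THaarCoeffLimPf1}, obtaining
$$
\left| (-1)^{|\theta|} 2^{(n+2)|\theta| + ns/2} d_{\theta,\alpha}^n (f) - \frac{D^\theta f \left( x_\alpha^n \right)}{\theta!} \right| \le {|\theta|+s \choose s-1}\, 2^{|\theta|-n-1}\, \| D^{|\theta|+1} f \|_\infty
$$
for all $\alpha \in \ZZ^s$ and $n \in \NN_0$. The right-hand side is independent of~$\alpha$, so taking the supremum over $\alpha \in \ZZ^s$ leaves it unchanged. Letting $n \to \infty$, the factor $2^{|\theta|-n-1}$ tends to zero while the binomial coefficient and $\| D^{|\theta|+1} f \|_\infty$ are fixed finite constants; hence the supremum tends to zero, in fact at the rate $O(2^{-n})$, which is exactly~\eqref{eq:HaarCoeffLimGlob}.

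There is no genuine obstacle here: the analytic work was already carried out in Theorem~\ref{T:HaarCoeffLim}, and the corollary only requires noticing that the error bound derived there depends on~$f$ solely through a local maximum of its top-order derivatives on a shrinking cube, which under the global boundedness assumption is dominated \emph{uniformly in}~$\alpha$ by $\| D^{|\theta|+1} f \|_\infty$. The one point worth stating explicitly is the index bookkeeping $|\theta| \le s$, which is what makes the single blanket hypothesis $f \in C^{s+1}(\RR^s)$ with bounded derivatives up to order $s+1$ suffice simultaneously for all admissible~$\theta \in \{0,1\}^s$.
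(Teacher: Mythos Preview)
Your proposal is correct and follows exactly the paper's approach: the paper simply remarks that under the global boundedness hypothesis the right-hand side of~\eqref{eq:THaarCoeffLimPf1} is independent of~$\alpha$, which immediately yields the uniform limit. Your write-up makes this explicit, including the index bookkeeping $|\theta|+1 \le s+1$ that justifies using the blanket hypothesis for all~$\theta$.
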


Theorem~\ref{T:HaarCoeffLim} and its uniform version,
Corollary~\ref{C:HaarCoeffLimGlob},
have some consequences. The first one is that
Haar wavelet coefficients show what is known as a \emph{saturation behavior}
in Approximation Theory~\cite{Lorentz66}, i.e., they cannot decay
faster than a given rate, namely $\left| \kappa_{n,|\theta|}
\right|^{-1}$, no matter 
how smooth the underlying (nonpolynomial) function~$f$ is. Moreover,
we record the following for later reference. 

\begin{remark}[Wavelet coefficient decay]
  \label{R:CoeffDecayTheta}
  The decay rate of the coefficients depends
  on~$|\theta|$, which means that for smooth functions the
  coefficients decay faster whenever more ``wavelet contribution'' is
  contained in the respective wavelet function~$\psi_\theta$. In particular,
  in wavelet compression as in JPEG2000 where a hard thresholding
  is applied to the normalized wavelet coefficients, the
  coefficients~$d_{\theta,\alpha}^n (f)$ with a large $|\theta|$ are
  more likely to be eliminated by the thresholding process. This
  phenomenon is frequently observed in wavelet compression for images. 
\end{remark}

\subsection{Gradients}
\label{sec:Gradients}
From the wavelet coefficients we can
form, for~$\alpha \in \ZZ^s$ and~$n\in\NN_0$, the renormalized vectors
\begin{eqnarray}
  \label{eq:dbalphaDef}
  \nonumber
  \hat\db_\alpha^n (f) & := & -2^{n(1-s/2) + 2} \db_\alpha^n(f) \\
    & \hphantom{:}= & 2^{-ns} \kappa_{n,1} \db_\alpha^n(f)
\end{eqnarray}
with
\begin{equation}
  \label{eq:dbalphaDef1}
  \db_\alpha^n(f) :=
  \begin{pmatrix}
    d^n_{\epsilon_j,\alpha} (f) : j=1,\dots,s
  \end{pmatrix}
  \in \RR^s.
\end{equation}
By Theorem~\ref{T:HaarCoeffLim}, the vector~$2^{ns}
\hat\db_\alpha^n (f)$ is an approximation for the 
\emph{gradient} $\nabla f (x_\alpha^n)$, ${\alpha \in \ZZ^s}$, ${n \in
\NN_0}$. This can be used to extract gradient information directly from
the wavelet coefficient vectors $\kappa_{n,1} \, 
\begin{pmatrix}
  d^n_{\epsilon_j,\alpha} (f) : j=1,\dots,s
\end{pmatrix}$.
As an example, we consider the subsampled gradient fields for
piecewise smooth functions, computed directly from the wavelet
coefficients. For the $1$-norm and the squared $2$-norm, these are shown in
Fig.~\ref{fig:GradF1} and~\ref{fig:GradF2}, respectively.
\begin{figure}
  \begin{center}
    \framebox{\includegraphics[width=\imagewidthincolumn]{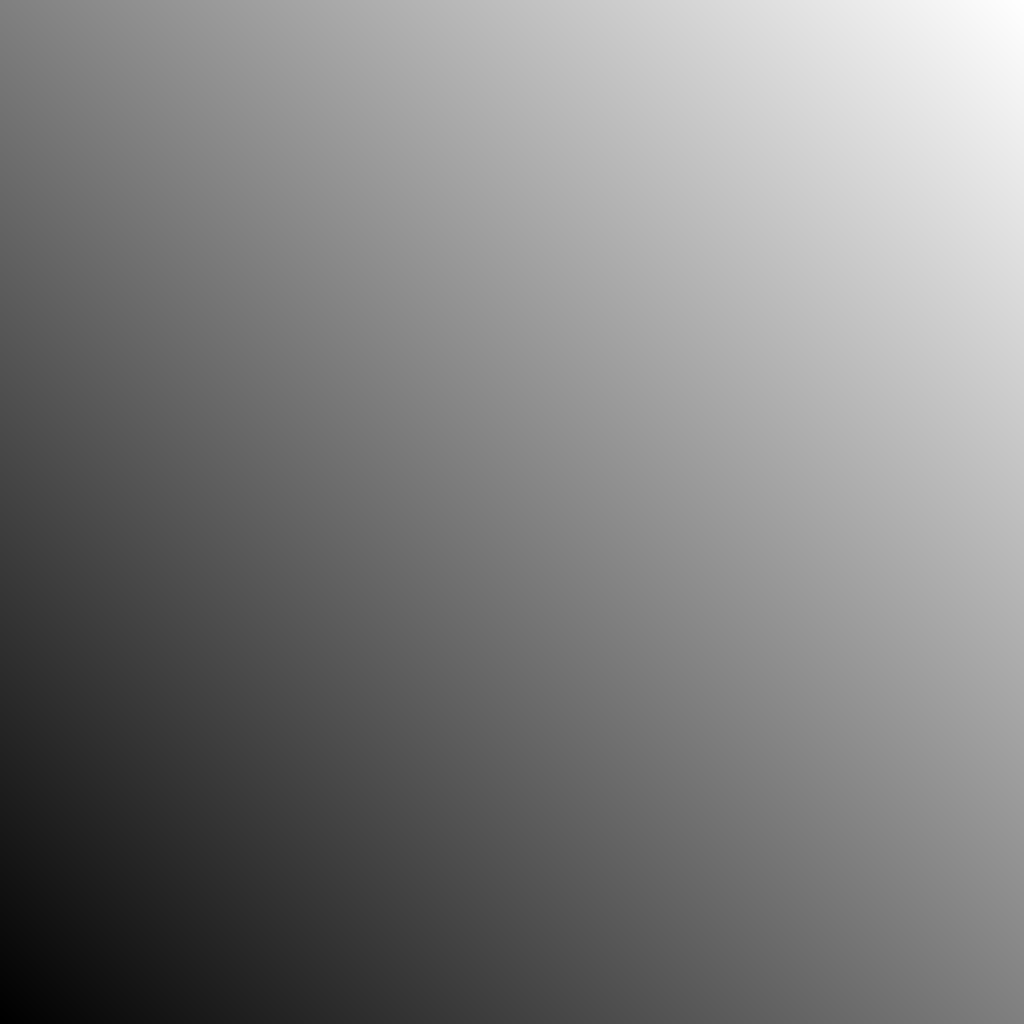}} \imagelinebreak \framebox{\includegraphics[width=\imagewidthincolumn]{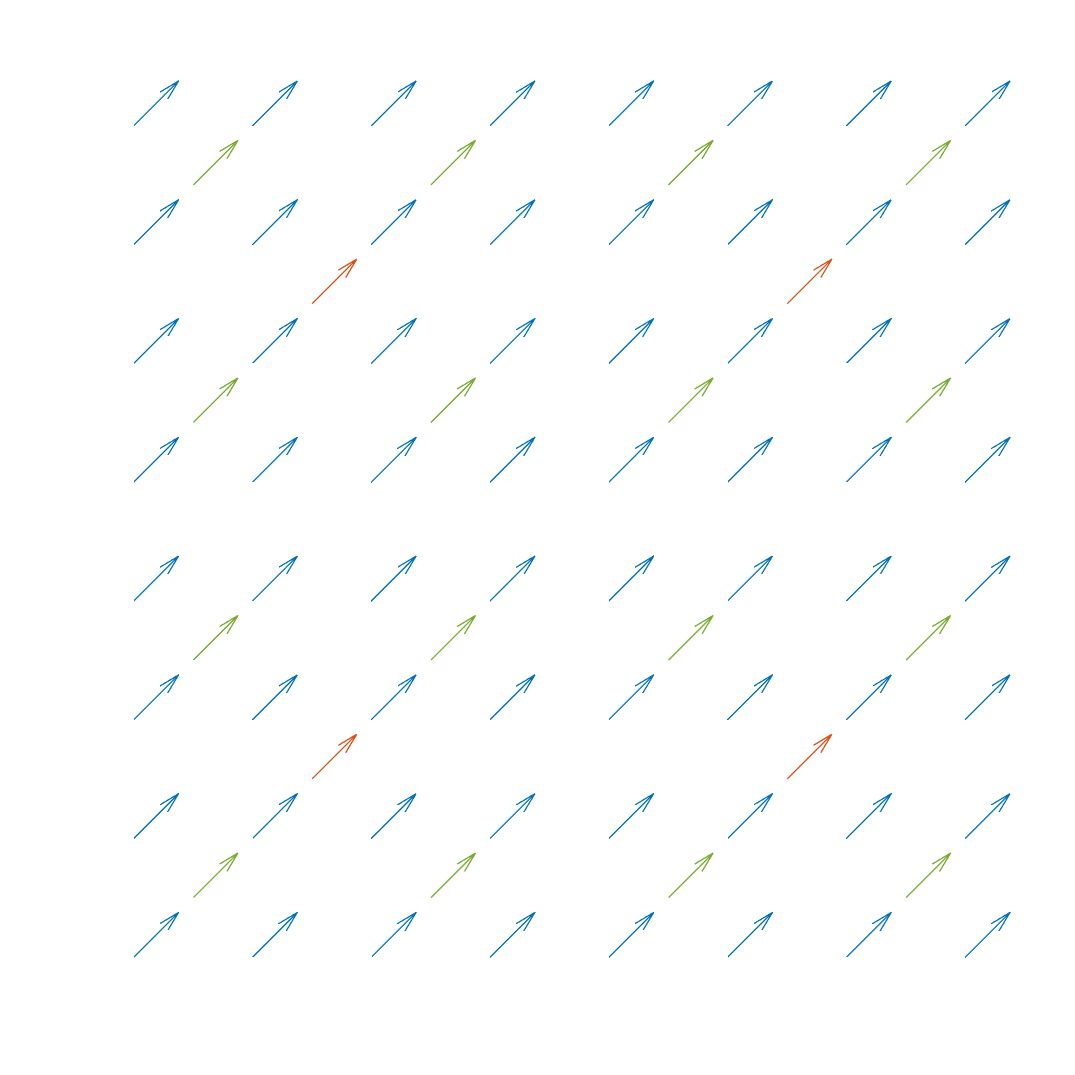}}
  \end{center}
  \caption{The level image \emph{(top)} and their vector fields of gradients for multiple levels~$n$ \emph{(bottom)} for ${f(x,y) = |x|+|y|}$ where~$x,y>0$\label{fig:GradF1}}
\end{figure}
\begin{figure}
  \begin{center}
    \framebox{\includegraphics[width=\imagewidthincolumn]{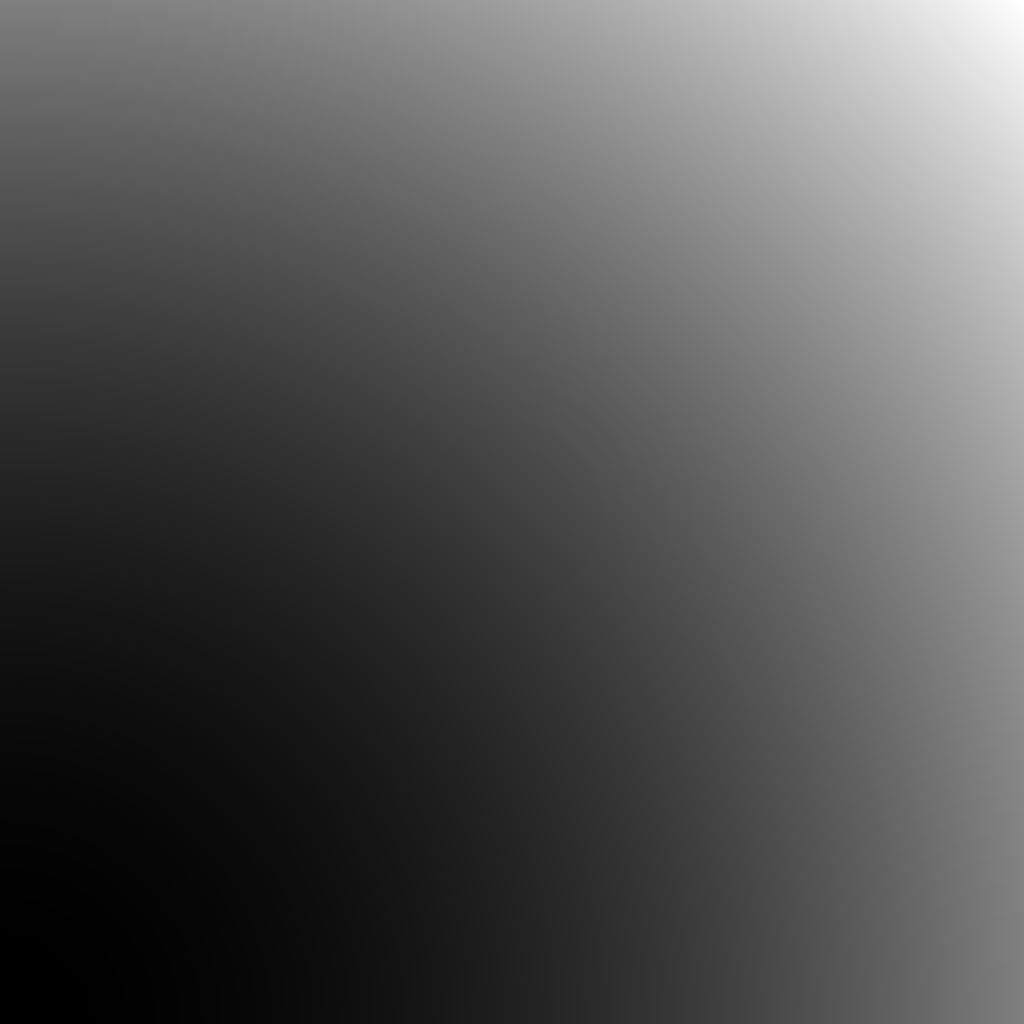}} \imagelinebreak \framebox{\includegraphics[width=\imagewidthincolumn]{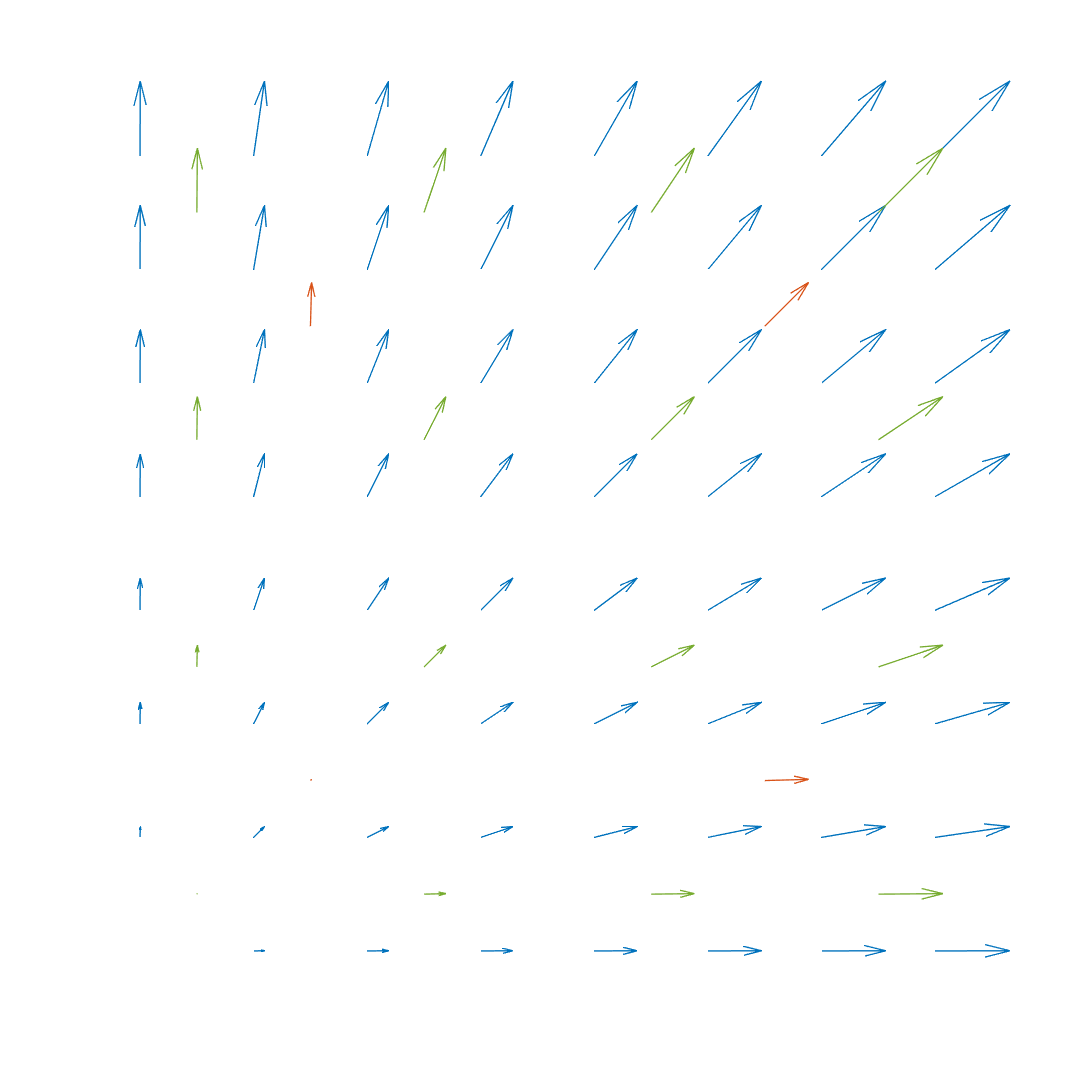}}
  \end{center}
  \caption{The level image \emph{(top)} and their vector fields of gradients for multiple levels~$n$ \emph{(bottom)} for ${f(x,y) = x^2 + y^2}$ where~$x,y>0$\label{fig:GradF2}}
\end{figure}
The situation changes in the case of non-smooth functions with sharp
contours. While in the binary image in Fig.~\ref{fig:GradF3} the \emph{directions} of 
the gradients are recognized correctly along the straight lines, their
lengths are upscaled by a factor of~$2^n$ which is due to the
different resolution levels as the difference between two neighboring
pixels is always either zero or one, but one divides by the
resolution-dependent distance between the pixels at different levels to obtain
the gradient. This can be compensated by rescaling the gradients by a
factor of~$2^n$ as shown on Fig.~\ref{fig:GradF3}. 

\begin{figure}
  \begin{center}
    \framebox{\includegraphics[width=\imagelesswidthincolumn]{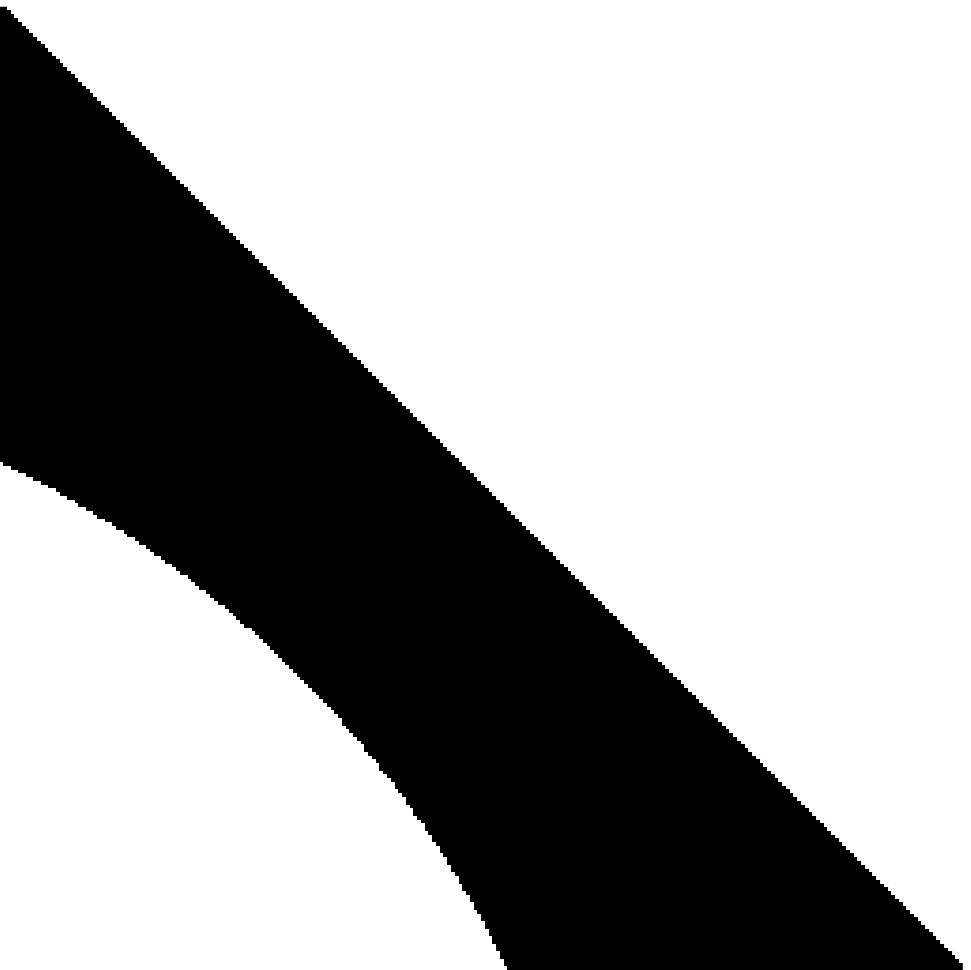}} \imagelinebreak
    \framebox{\includegraphics[width=\imagelesswidthincolumn,clip,trim={0 0.2cm 0 0}]{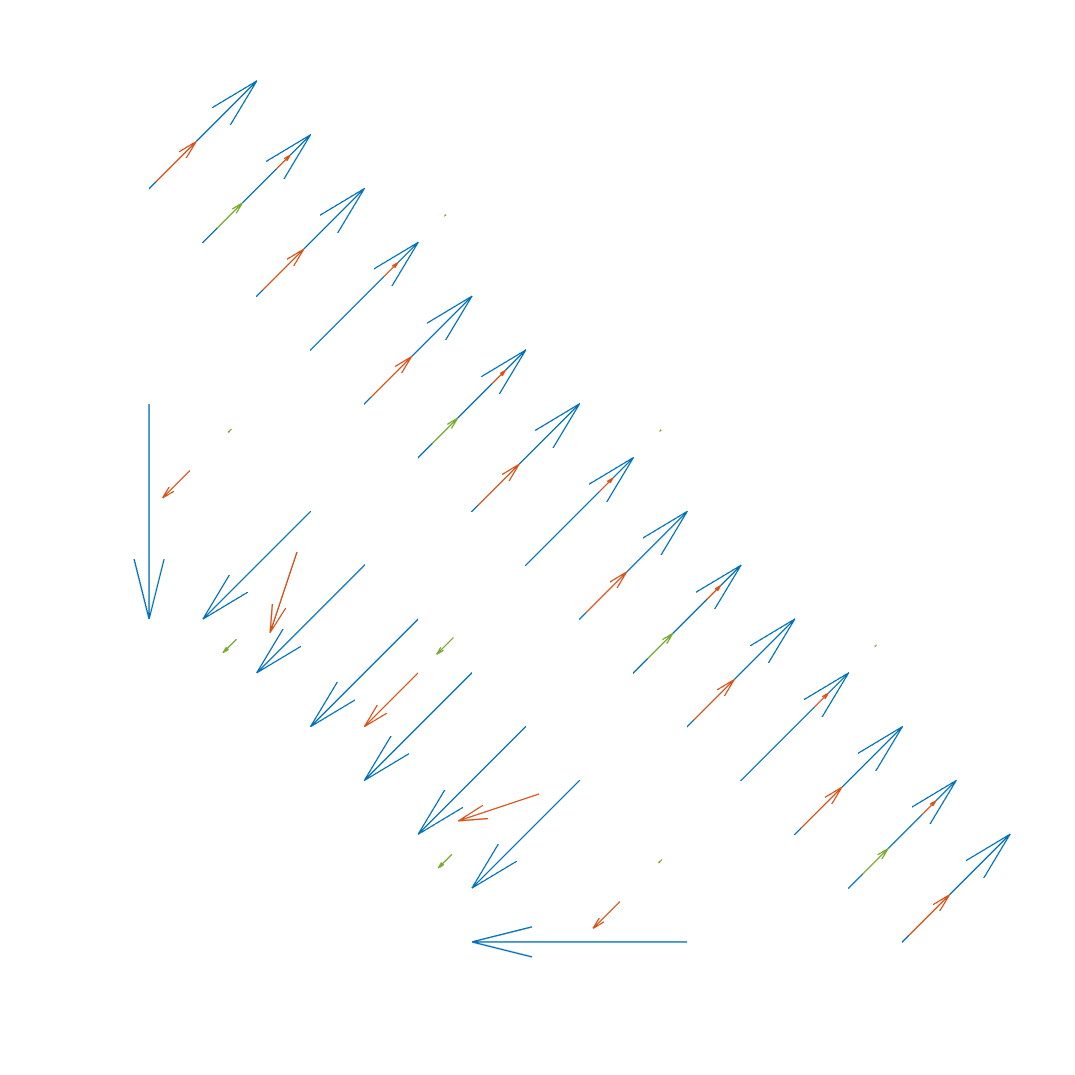}} \imagelinebreak
    \framebox{\includegraphics[width=\imagelesswidthincolumn,clip,trim={0 0.2cm 0 0}]{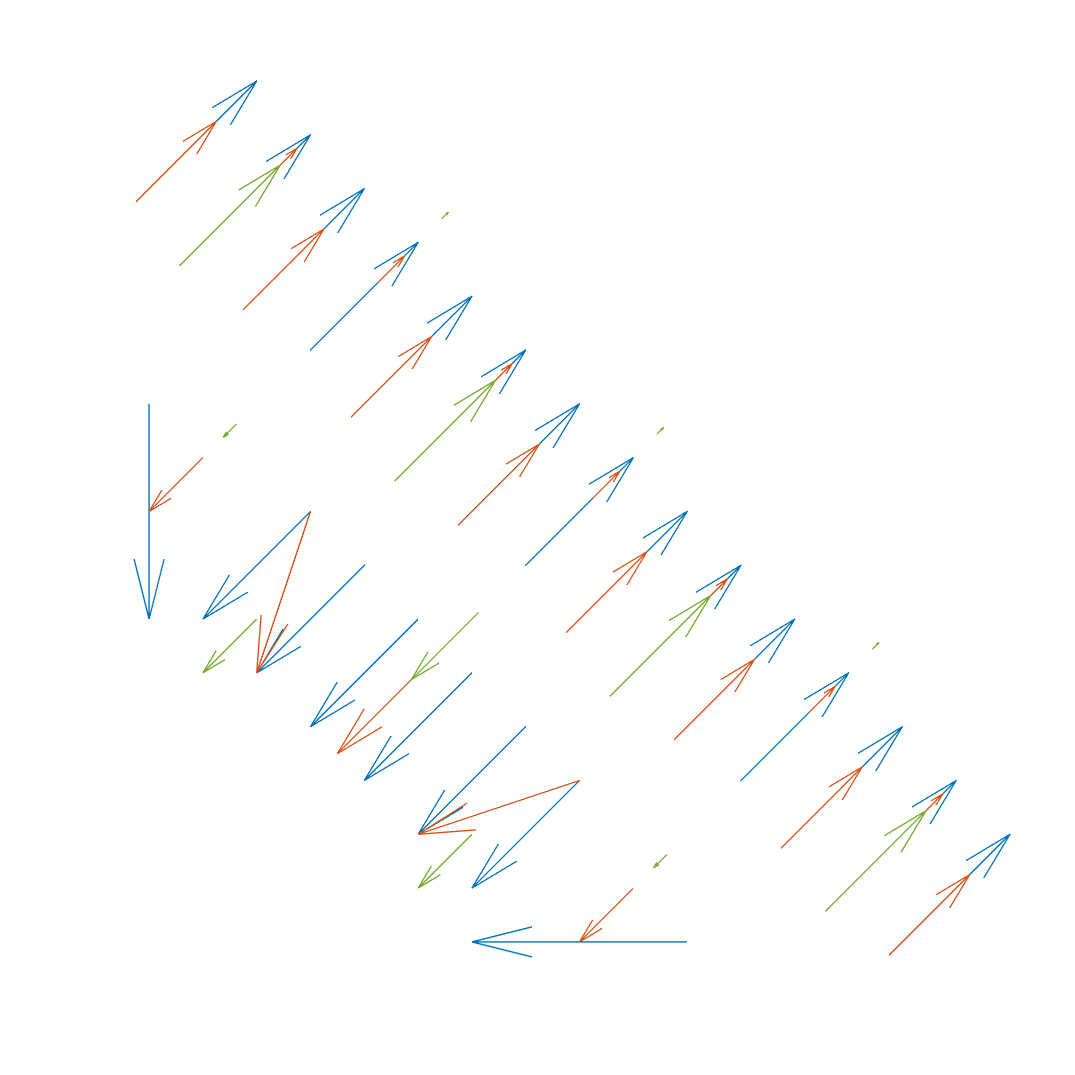}}
  \end{center}
  \caption{Simple synthetic image \emph{(top)} and the resulting gradient field (based on a low-resolution version for increased visual clarity),
    with the normalization factor~${2^{n(1+s/2) + 2} = 2^{2n+2}}$
    \emph{(middle)} and with a normalization~$2^{n+2}$ that compensates
    the resolution effects \emph{(bottom)}. In the images, the three
    colors stand for the three highest resolutions\label{fig:GradF3}}
\end{figure}

The estimate \eqref{eq:HaarCoeffLimGlob} is still a pointwise result,
even if the error is bounded uniformly with respect to~$\alpha$. We will 
explore this further in the next section to give an estimate of the
TV norm of~$f$ by means of wavelet coefficients, which also explains the
normalization chosen in~\eqref{eq:dbalphaDef}. So far, the estimates
in~\eqref{eq:HaarCoeffLim} and~\eqref{eq:HaarCoeffLimGlob} only work
in the supremum norm.

\subsection{Why Haar wavelets?}
\label{sec:WhyHaar}

The above derivation has been restricted to Haar wavelets for good
reasons. Besides the fact that their small support and simple
structure allows for fast computational methods like octrees, they
are the ones that can approximate gradients. In fact,
Lemma~\ref{L:gammathetMoment} can be easily extended to arbitrary
tensor product wavelets ${\psi (x) = \psi_1 (x_1) \otimes \cdots \otimes \psi_s
(x_s)}$ where $\psi_j$ has precisely $\nu_j$ vanishing moments. Then,
by the same arguments as in Lemma~\ref{L:gammathetMoment},
\begin{equation}
\int (x-x_\alpha^n)^\gamma \psi (x) \, dx = c_\gamma \,
\delta_{\gamma,\nu}
\end{equation}
whenever $|\gamma| \le |\nu|$ and the properly normalized wavelet
coefficients $d_{\theta,\alpha}^n (f)$ converge to $D^{\theta \cdot \nu}
f (x_\alpha)$, where $\theta \cdot \nu = \left( \theta_1
  \nu_1,\dots,\theta_s \nu_s \right)$ for sufficiently smooth~$f$.
Hence, an approximation of the gradient can be achieved if and
only if one uses tensor products of wavelets with only one vanishing
moments, i.e., $\nu = (1,\dots,1)$. Among these, Haar wavelets are the
ones of minimal support and we can compute the normalization
constants explicitly in an elementary way.

\section{Approximation of the TV norm}
\label{sec:TVApprox}

We will now derive estimates to show that the vectors
$\hat\db_\alpha^n (f)$ formed from the wavelet coefficients with only one
wavelet contribution, i.e., $d^n_{\epsilon_j,\alpha} (f)$ for $j=1,\dots,s$ in
Eq.~\eqref{eq:dbalphaDef1}, give a good approximation for the TV norm of~$f$
again directly from the wavelet coefficients.

\subsection{General remarks}
\label{sec:GenRem}

In the rest of this section, we make the assumption that the Haar
wavelet coefficients $d_{\theta,\alpha}^n (f)$ stem from a function~$f\in C^2\left(\RR^s\right)$
and the estimate will be given in terms of the magnitude
of its second derivatives. This is mainly for technical reasons and
will enable us to give quantitative estimates for the quality of
the approximate TV norm. This assumption includes, for example, the
case where $f$ is a bandlimited function, i.e., a function whose
Fourier transform~$\hat f$ is supported on some compact set $\Omega$,
a frequent assumption in computed tomography,
cf.~\cite{natterer86:_mathem_comput_tomog}. Moreover, it could be
extended to piecewise $C^2$ functions, another common assumption
in industrial CT, by locality of the
wavelet coefficients.

Despite of all that, we are well aware that this is a very restrictive
class of functions and that this assumption cannot be made in
general. However, the main goal of the paper is to derive the proper
normalization of the wavelet coefficients, which will depend on the
dimensionality of the data, and the proper thresholding process
induced by the approximate problem, which will be a thresholding with
respect to the length of certain vectors formed by the wavelet
coefficients. These two aspects are determined on the dense subset of
$C^2$ functions and if at all a wavelet thresholding is going to work
for approximate TV, this way is straightforward.

\subsection{The approximate TV norm}
\label{sec:TVNorm}

First, let us recall that the TV norm of~$f\in C^2\left(\RR^s\right)$, defined as
\begin{equation}
  \label{eq:TVNormDef}
  \| f \|_{TV} := \left\| | \nabla f |_2 \right\|_1
  = \int_{\RR^s} \left| \nabla f(x) \right|_2 \, dx,
\end{equation}
where $| \cdot |_2$ denotes the Euclidean norm in~$\RR^s$. This functional
plays a fundamental role in many imaging applications, at least since
the work of Rudin, Osher and Fatemi~\cite{rudin92:_nonlin}, which made
TV regularization a standard method in image processing, in particular for
image denoising. For a survey on TV based applications, we refer the reader
to~\cite{chambolle10:_introd_total_variat_image_analy}.

In the following we will need the Frobenius norm of the second
derivative of $f \in C^2\left(\RR^s\right)$, i.e., for $ x\in\RR^s$ we set
\begin{equation}
\left| D^2 f (x) \right|_F := \left( \sum_{j,k=1}^s \left(\frac{\partial^2 f
      (x)}{\partial x_j \partial x_{k}} \right)^2 \right)^{1/2}.
\end{equation}
We also define for $n \in \NN_0$, the functional
\begin{eqnarray}
  \label{eq:HnDef}
  \nonumber
  & & \hspace{-2em} H_n (f) \\
  & & \hspace{-2em} := \sum_{\alpha \in \ZZ^s} 2^{-ns} \max_{x \in x_\alpha^n +
    2^{-n} [ -\frac12,\frac12 ]^{s}} \left| D^2 f (x) \right|_F,
\end{eqnarray}
which may in general take on the value~$\infty$, but has the property
that $H_{n+1} (f) \le H_n (f)$, $n \in \NN_0$, and is clearly bounded
from below. Functions for which $H_n (f) < \infty$, $n \in \NN_0$, are
for  
example compactly supported functions in $C^2\left(\RR^s\right)$ as then the
Frobenius norm is globally bounded and the sum in~\eqref{eq:HnDef} becomes a
finite one. Since bandlimited functions decrease exponentially and
have a bandlimited second derivative, they
also satisfy $H_n (f) < \infty$, $n \in \NN_0$.

We will prove in this section that the
$\ell_1$ norm of the sequence $\left| \hat\db_\alpha^n(f) \right|_2$,
$\alpha\in\ZZ^s$, given as
\begin{eqnarray}
  \nonumber
  \lefteqn{\left\| \left| \hat\db^n(f) \right|_2 \right\|_1
  := \sum_{\alpha \in \ZZ^s} \left| \hat\db_\alpha^n(f) \right|_2} \nonumber\\
  \label{eq:NormalizedDiscGrads}
  & \hphantom{:}= & 2^{n(1-s/2) + 2} \sum_{\alpha \in \ZZ^s} \left( \sum_{j=1}^s \left(
        d_{\epsilon_j,\alpha}^n(f) \right)^2 \right)^{1/2}
\end{eqnarray}
with $\hat\db^n(f) := \left( \hat\db_\alpha^n(f) : \alpha \in \ZZ^s \right)$,
is an approximation to $\| f \|_{TV}$, and we provide an explicit
estimate for the error of this approximation. The result is as follows.

\begin{theorem}
  \label{T:WaveletTVApprox}
  If $f \in C^2\left(\RR^s\right)$ and $H_n (f) < \infty$ for some $n \in
  \NN_0$, then there exists, for any $n_0 \in \NN_0$, a constant $C >
  0$ that depends only on $n_0$ and $f$, such that
  \begin{equation}
    \label{eq:WaveletTVApprox}
    \left| \left\| \left| \hat\db^n(f) \right|_2 \right\|_1 - \| f \|_{TV}
    \right| \le C 2^{-n} \left\| \left| D^2 f \right|_F \right\|_1
  \end{equation}
  holds for all $n \ge n_0$.
\end{theorem}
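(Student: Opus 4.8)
The estimate to prove is a quantitative version of the fact that $2^{ns}\hat\db_\alpha^n(f)\approx\nabla f(x_\alpha^n)$ (Theorem~\ref{T:HaarCoeffLim}) summed against the grid cells of volume $2^{-ns}$, which is a Riemann sum for $\|f\|_{TV}=\int|\nabla f|_2$. So I would split the error into two pieces via a triangle inequality: first, the difference between $\|\,|\hat\db^n|_2\,\|_1$ and the Riemann sum $R_n:=\sum_\alpha 2^{-ns}|\nabla f(x_\alpha^n)|_2$; second, the difference between $R_n$ and $\int_{\RR^s}|\nabla f(x)|_2\,dx$. For the second piece, since $x\mapsto|\nabla f(x)|_2$ is Lipschitz on each cube $Q_\alpha^n:=x_\alpha^n+2^{-n}[-\frac12,\frac12]^s$ with Lipschitz constant controlled by $\||D^2 f|_F\|_\infty$ on that cube (because $|\,|\nabla f(x)|_2-|\nabla f(y)|_2|\le|\nabla f(x)-\nabla f(y)|_2\le|D^2 f|_F\cdot|x-y|$ along a segment), the standard midpoint-rule error on cube $Q_\alpha^n$ is bounded by $c_s\,2^{-n}\cdot 2^{-ns}\max_{x\in Q_\alpha^n}|D^2 f(x)|_F$; summing over $\alpha$ gives $\le c_s\,2^{-n}H_n(f)$, and since $H_n(f)\le H_{n_0}(f)$ for $n\ge n_0$ and $H_n(f)\to\||D^2 f|_F\|_1$ (a Riemann sum of an integrable function from above), this is the desired $O(2^{-n}\||D^2 f|_F\|_1)$ bound after absorbing constants.

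For the first piece I need a uniform, $\ell_1$-summable version of Theorem~\ref{T:HaarCoeffLim} for the specific indices $\theta=\epsilon_j$, i.e. $|\theta|=1$. Going back into the proof of that theorem with $|\theta|=1$: the second-order Taylor remainder term for $d^n_{\epsilon_j,\alpha}(f)$ is exactly $\sum_{|\gamma|=2}\int D^\gamma f(\xi(x))(x-x_\alpha^n)^\gamma\psi^n_{\epsilon_j,\alpha}(x)\,dx$, and a careful version of the estimate in~\eqref{eq:THaarCoeffLimPf1} gives
\[
\left|\,2^{n(1+s/2)+2}d^n_{\epsilon_j,\alpha}(f)+2^{ns}\,\partial_j f(x_\alpha^n)\,\right|\le c'_s\,2^{-n}\,2^{ns}\max_{x\in Q_\alpha^n}|D^2 f(x)|_F,
\]
where I have pulled out the factor $2^{ns}$ to match the normalization in $2^{ns}\hat\db^n_\alpha\approx\nabla f$. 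Collecting the $s$ components into the Euclidean norm and using $|\,|u|_2-|v|_2\,|\le|u-v|_2$, I get
\[
\bigl|\,|2^{ns}\hat\db^n_\alpha(f)|_2-|\nabla f(x_\alpha^n)|_2\,\bigr|\le C'_s\,2^{-n}\,2^{ns}\max_{x\in Q_\alpha^n}|D^2 f(x)|_F.
\]
Multiplying by the cell volume $2^{-ns}$ and summing over $\alpha\in\ZZ^s$ yields $\bigl|\,\|\,|\hat\db^n|_2\,\|_1-R_n\bigr|\le C'_s\,2^{-n}H_n(f)$, again of the required order. Adding the two pieces and bounding $H_n(f)$ by, say, $H_{n_0}(f)$ (finite by hypothesis and monotone), and noting $H_{n_0}(f)$ can itself be compared to $\||D^2 f|_F\|_1$ up to a constant depending only on $n_0$ and $f$, gives the claimed inequality with $C=C(n_0,f)$.

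**Main obstacle.** The routine parts are the two midpoint-rule-style estimates; the one subtlety worth care is the bookkeeping of the normalization constants — making sure the factor $-2^{n(1-s/2)+2}$ in the definition of $\hat\db^n_\alpha$, the extra $2^{ns}$ separating $\hat\db^n_\alpha$ from the true gradient, and the $2^{-ns}$ cell volume in $H_n$ all cancel correctly so that the final bound is genuinely $O(2^{-n})$ and not, say, $O(2^{-n}2^{ns})$. A second, more conceptual point is justifying the passage from $H_n(f)$ (a sum of cell-maxima) to $\||D^2 f|_F\|_1$: one must argue that $H_n(f)$ is finite and bounded by a constant times $\||D^2 f|_F\|_1$ for $n\ge n_0$, which follows because $H_n(f)$ decreases in $n$ and, being an upper Riemann sum of the integrable function $|D^2 f|_F$, converges to the integral; absorbing its (finite, $f$- and $n_0$-dependent) value into $C$ is what forces $C$ to depend on $n_0$ and $f$ rather than being absolute. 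Everything else is the standard Taylor-with-remainder estimate already carried out in the proof of Theorem~\ref{T:HaarCoeffLim}, specialized to $|\theta|=1$ and summed.
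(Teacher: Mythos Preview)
Your approach is essentially the same as the paper's: it also splits the error through the intermediate Riemann sum $R_n=\sum_\alpha 2^{-ns}|\nabla f(x_\alpha^n)|_2$ (stated there as two lemmas, one comparing $\|\,|\hat\db^n|_2\,\|_1$ to $R_n$ via a second-order Taylor remainder of the wavelet coefficient, the other comparing $R_n$ to $\|f\|_{TV}$ via a first-order Taylor expansion of $\nabla f$), and then uses monotone convergence of the upper step functions of $|D^2 f|_F$ to justify bounding the cell-maxima sum $H_n(f)$ by $C(n_0,f)\,\|\,|D^2 f|_F\,\|_1$. The only cosmetic differences are that the paper writes the Taylor remainder in integral form and spells out the Beppo~Levi argument for the integrability of $|D^2 f|_F$ that you flagged as the ``conceptual point.''
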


The slightly strange formulation of Theorem~\ref{T:WaveletTVApprox}
will become clear from the proof. Indeed, the number~$n_0$ can be used
to control and reduce the constant~$C$ by relying on high-pass
information only. More details on that in Remark~\ref{R:n0Meaning}
later. Our main point in Theorem~\ref{T:WaveletTVApprox} is that
considering some part of the wavelet coefficients with a proper
dimension-dependent normalization gives a discrete approximation of
the TV norm of the underlying function with a quantitative error.

\begin{remark}[Normalization of coefficients]
\hphantom{.}
\label{remark:normalization}
  \begin{enumerate}
  \item The normalization of the coefficients $\hat\db_\alpha^n (f)$ in
    \eqref{eq:dbalphaDef} may be somewhat surprising at first view
    because of its dependency on the dimensionality~$s$. For $s=1$,
    coefficients at higher level are reweighted with an increasing
    weight $2^{n/2+2}$, for the image case $s=2$ the orthonormal normalization
    is just perfect, while for $s > 2$ the weights decrease and penalize
    higher levels, which is particularly relevant for our main
    application, namely the volume case $s=3$.
  \item Nevertheless, there is an explanation for this behavior. The TV
    norm considers the \emph{Euclidean length} of the gradient, hence a
    one-dimensional feature that scales like $2^{-n}$, while all other
    normalizations within the integrals are based on volumes that
    scale like $2^{-ns}$. This feature also appears when considering
    coefficients with respect to unnormalized wavelets.
  \item It is important to keep in mind that it is the proper
    \emph{renormalization} of the relevant coefficients and their
    treatment as vectors that allows us to approximate the TV norm by
    means of summing the coefficients.
  \end{enumerate}
\end{remark}

\subsection{Proof of Theorem~\ref{T:WaveletTVApprox}}
\label{sec:TechDet}

We split the proof of Theorem~\ref{T:WaveletTVApprox} into two parts,
first showing that $\left\| \left| \hat\db^n(f) \right|_2 \right\|_1$
is close to a cubature formula and then estimating the quality of the
cubature formula. To that end, we define the sequence
\begin{equation}
\fb_\nabla^n := \left( \nabla f \left(x_\alpha^n\right) : \alpha \in \ZZ^s
\right)
\end{equation}
of sampled gradients of~$f$.

\begin{lemma}
  \label{L:WaveletTVCubApprox}
  If $f \in C^2\left(\RR^s\right)$ and $H_n (f) < \infty$ for some $n \in
  \NN_0$, then there exists, for any $n_0 \in \NN_0$, a constant~$C >
  0$ that depends only on $n_0$ and $f$ such that
  \begin{equation}
    \label{eq:WaveletTVCubApprox}
    \left\| \left| \hat\db^n(f) \right|_2 - 2^{-ns} \left| \fb_\nabla^n
      \right|_2 \right\|_1 \le 2^{-n} C \left\| \left| D^2 f \right|_F
    \right\|_1
  \end{equation}
  holds for any $ n \ge n_0$.
\end{lemma}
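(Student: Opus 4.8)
Let me work out how to prove Lemma~\ref{L:WaveletTVCubApprox}, which compares $\left\| \left| \hat\db^n \right|_2 \right\|_1$ with the "Riemann-sum-like" quantity $2^{-ns} \left\| \left| \fb_\nabla^n \right|_2 \right\|_1$. The natural approach is to bound the whole $\ell_1$ difference by $\ell_1$ of the termwise differences, i.e.\ write
$$
\left\| \left| \hat\db^n \right|_2 - 2^{-ns} \left| \fb_\nabla^n \right|_2 \right\|_1
\le \sum_{\alpha \in \ZZ^s} \left| \left| \hat\db_\alpha^n \right|_2 - 2^{-ns} \left| \nabla f(x_\alpha^n) \right|_2 \right|
\le \sum_{\alpha \in \ZZ^s} 2^{-ns} \left| 2^{ns} \hat\db_\alpha^n - \nabla f(x_\alpha^n) \right|_2,
$$
using the reverse triangle inequality $\bigl| |u|_2 - |v|_2 \bigr| \le |u-v|_2$ on each lattice point. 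So everything reduces to a *pointwise* estimate of $\left| 2^{ns} \hat\db_\alpha^n(f) - \nabla f(x_\alpha^n) \right|_2$ that is summable against $2^{-ns}$.

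**Key steps.** First I would recall that $\hat\db_\alpha^n(f) = -2^{n(1-s/2)+2} \db_\alpha^n(f)$ with $j$-th component $d^n_{\epsilon_j,\alpha}(f)$, for which $|\theta| = |\epsilon_j| = 1$, so that the renormalization factor in Theorem~\ref{T:HaarCoeffLim} is $(-1)^{1} 2^{(n+2) + ns/2} = -2^{n+2+ns/2}$. Hence $2^{ns}\hat\db_\alpha^n(f)$ is exactly the vector whose $j$-th entry is $-2^{n+2+ns/2} d^n_{\epsilon_j,\alpha}(f)$, and the quantity to control is $\bigl| 2^{ns}\hat\db_\alpha^n(f) - \nabla f(x_\alpha^n)\bigr|_2$. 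For this I would revisit the proof of Theorem~\ref{T:HaarCoeffLim} but keep the explicit error term rather than passing to the limit: for $|\theta|=1$ the Taylor expansion of $f$ at $x_\alpha^n$ to order $2$ gives, after integrating against $\psi^n_{\epsilon_j,\alpha}$ and using Lemma~\ref{L:gammathetMoment} to kill the order-$0$ and pick out the order-$1$ term, a remainder built from the second derivatives $D^\gamma f(\xi)$, $|\gamma| = 2$, evaluated on the cube $x_\alpha^n + 2^{-n}[-\tfrac12,\tfrac12]^s$. The bound from \eqref{eq:THaarCoeffLimPf1} with $|\theta| = 1$ reads
$$
\left| 2^{ns}\hat\db_\alpha^n(f) - \nabla f(x_\alpha^n) \right|_\infty \;\le\; C_s \, 2^{-n} \max_{x \in x_\alpha^n + 2^{-n}[-\frac12,\frac12]^s} \left| D^2 f(x) \right|_F,
$$
where the factor $2^{|\theta|-n-1} = 2^{-n}$ produces the claimed $2^{-n}$ gain and $C_s$ depends only on $s$ (absorbing the binomial $\binom{|\theta|+s}{s-1}$ and the conversion from the $\gamma$-indexed maxima and $\gamma!$ to the Frobenius norm, plus the $\sqrt{s}$ from $|\cdot|_\infty$ vs $|\cdot|_2$). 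Summing over $\alpha$ against $2^{-ns}$ then gives exactly $2^{-n} C_s H_n(f)$, and since $H_n(f) \le H_{n_0}(f) \le \left\| |D^2 f|_F \right\|_1$ (the Riemann sum defining $H_{n_0}$ over-estimates the integral — or more safely, $H_n(f) \to \left\| |D^2f|_F \right\|_1$ from above as $n \to \infty$, so $H_n(f)$ is bounded by a constant times $\left\| |D^2f|_F\right\|_1$ uniformly for $n \ge n_0$, with the constant depending on $n_0$ and $f$), we obtain the stated inequality with $C$ depending only on $n_0$ and $f$. This is where the monotonicity $H_{n+1}(f) \le H_n(f)$ noted after \eqref{eq:HnDef} and the finiteness hypothesis $H_n(f) < \infty$ get used.

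**Main obstacle.** The routine part is the pointwise Taylor estimate — it is essentially \eqref{eq:THaarCoeffLimPf1} specialized to $|\theta|=1$, which the paper has already done. The genuinely delicate point is the passage from $\sum_\alpha 2^{-ns}\max_{x\in \text{cube}}|D^2f(x)|_F = H_n(f)$ to the clean right-hand side $\left\| |D^2 f|_F \right\|_1$: one must argue that $H_n(f)$ does not grow as $n$ increases (monotone decrease, so bounded by $H_{n_0}(f)$) and then relate $H_{n_0}(f)$ to the integral $\left\| |D^2f|_F\right\|_1$ up to a constant depending on $n_0$ and $f$ — this is exactly why the theorem is stated with the "slightly strange" auxiliary parameter $n_0$, since for small $n$ the sup-over-cube sampling can exceed the integral by a factor that one only controls once the grid is fine enough, i.e.\ once $n \ge n_0$. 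I would make this precise by noting that $2^{-ns}\max_{\text{cube}}|D^2f|_F \le 2^{-ns}\bigl( \text{average} + \text{oscillation}\bigr)$ and that for $C^2$ (hence, on the relevant compact set, uniformly continuous second derivatives) the oscillation term summed over $\alpha$ is itself $o(1)$ as $n\to\infty$; alternatively, and more cheaply, just define $C := C_s \sup_{n \ge n_0} H_n(f) / \left\| |D^2f|_F\right\|_1 = C_s H_{n_0}(f)/\left\||D^2f|_F\right\|_1 < \infty$, which is legitimate precisely because of the hypothesis and the monotonicity, and depends only on $n_0$ and $f$ as claimed.
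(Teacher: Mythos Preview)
Your plan is correct and follows the same high-level strategy as the paper: reduce via the reverse triangle inequality to a pointwise estimate of $\bigl|2^{ns}\hat\db_\alpha^n - \nabla f(x_\alpha^n)\bigr|_2$, control that by a second-order Taylor remainder on the cube $x_\alpha^n + 2^{-n}[-\tfrac12,\tfrac12]^s$, and then sum over~$\alpha$.

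There is, however, a genuine difference in how the summed remainder is tied to $\left\||D^2f|_F\right\|_1$. You recycle \eqref{eq:THaarCoeffLimPf1} (Lagrange-form remainder), which bounds each term by the \emph{maximum} of $|D^2f|_F$ over the cube, so the sum is exactly $C_s\,2^{-n}H_n(f)$; you then invoke the monotonicity $H_n(f)\le H_{n_0}(f)$ and simply \emph{define} $C := C_s\,H_{n_0}(f)/\left\||D^2f|_F\right\|_1$. (Your first inequality $H_{n_0}(f)\le\left\||D^2f|_F\right\|_1$ is backwards, as you immediately notice; the ratio definition you fall back on is the right move.) The paper instead reworks the Taylor step with the \emph{integral} remainder, bounds $|h_{j,\alpha}^n|$ by an expression involving $\int_0^1(1-\xi)\int_{2^{-n}[-\frac12,\frac12]^s}|D^2f(x_\alpha^n+\xi x)|_F\,dx\,d\xi$, and then uses a Beppo~Levi / squeeze argument with the step functions $g_\pm^n$ to show that the $\alpha$-sum converges to $\left\||D^2f|_F\right\|_1$ uniformly in $\xi$, yielding a constant $C(n_0,f)\to 1$ as $n_0\to\infty$.

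What each buys: your route is shorter and more elementary, since the monotonicity of $H_n$ and the definition of $C$ as a ratio dispense with any measure-theoretic convergence argument. The paper's route makes the structure of the constant explicit (the factor $s^{3/2}/2$ in \eqref{eq:LWaveletTVCubApproxPf2}) and supports Remark~\ref{R:n0Meaning}. Note, though, that your approach gives the same asymptotic: since $H_{n_0}(f)\downarrow\left\||D^2f|_F\right\|_1$, your constant $C_s\,H_{n_0}(f)/\left\||D^2f|_F\right\|_1$ also tends to $C_s$ as $n_0\to\infty$. One small edge case you should mention: if $\left\||D^2f|_F\right\|_1=0$ then $f$ is affine, the remainder vanishes identically, and \eqref{eq:WaveletTVCubApprox} holds trivially, so the ratio definition of $C$ is harmless.
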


\begin{proof}
  We again use a Taylor expansion at~$x_\alpha^n$, this time with an integral
  remainder. It follows directly by applying the univariate formula
  to~${t \mapsto f \left( x_\alpha + t(x-x_\alpha^n) \right)}$, $t \in
  [0,1]$, and takes the form
  \begin{eqnarray}
    \nonumber
    \lefteqn{f(x)}\\
    \nonumber
    & = & f(x_\alpha^n) + \nabla f(x_\alpha^n)^T
    (x-x_\alpha^n) \\
    \nonumber
    & & +
        \int_0^1 (1-\xi) (x-x_\alpha^n )^T 
        D^2 f \left( x_\alpha^n + \xi (x-x_\alpha^n) \right) \\
    \label{eq:Taylorsd}
    & & \qquad \times \; (x-x_\alpha^n) \, d\xi.
  \end{eqnarray}
  Using Lemma~\ref{L:gammathetMoment}, it follows for
  $j=1,\dots,s$ that
  \begin{eqnarray}
    \nonumber
    \lefteqn{d_{\epsilon_j,\alpha}^n (f)} \\
    \nonumber
    & = & \int_{\RR^s} f(x) \psi_{\epsilon_j,\alpha}^n (x) \, dx \\
    \label{eq:LWaveletTVCubApproxPf1}
    & = & -2^{-n(1+s/2)-2} \frac{\partial f}{\partial x_j} (x_\alpha^n) +
    h_{j,\alpha}^n(f),
  \end{eqnarray}
  where
  \begin{eqnarray}
    \lefteqn{h_{j,\alpha}^n(f)
    := \int_{\RR^s} \int_0^1 (1-\xi)} \nonumber\\
    & & \times \, (x-x_\alpha^n )^T D^2 f \left( x_\alpha^n + \xi
        (x-x_\alpha^n) \right) (x-x_\alpha^n) \nonumber\\
    & & \times \,
        \psi_{\epsilon_j,\alpha}^n (x) \, d\xi dx .
  \end{eqnarray}
  The magnitude of the latter can be estimated as
  \begin{eqnarray}
    \lefteqn{\left| h_{j,\alpha}^n(f) \right|} \nonumber\\
      & \le & 2^{ns/2} \int_0^1 (1-\xi) \nonumber\\
      & & \times \, \int_{2^{-n}
        [-\frac12,\frac12]^{s}} \left| x \right|_2^2
          \left| D^2 f \left(x_\alpha^n + \xi x \right)
          \right|_F \, dx \, d\xi \nonumber\\
      & \le & s 2^{-2n-2+ns/2} \int_0^1 (1-\xi) \nonumber\\
      && \times\int_{2^{-n}
         [-\frac12,\frac12]^{s}} 
         \left| D^2 f \left(x_\alpha^n + \xi x \right)
         \right|_F \, dx \, d\xi.
  \end{eqnarray}
  This estimate is independent of~$j$. Hence, if we define
  \begin{equation}
  \hb_\alpha^n(f) = \left( h_{j,\alpha}^n(f) : j=1,\dots,s \right),
  \end{equation}
  we get that
  \begin{eqnarray}
    \nonumber
    \lefteqn{\left| \hb_\alpha^n(f) \right|_2} \\
    \nonumber
    & \le & s^{3/2} 2^{-2n-2+ns/2} \int_0^1
    (1-\xi) \\
    \label{eq:LWaveletTVCubApproxPf1a}
    & & \times \, \int_{2^{-n} 
            [-\frac12,\frac12]^{s}} 
            \left| D^2 f \left(x_\alpha^n + \xi x \right)
            \right|_F \, dx \, d\xi.
  \end{eqnarray}
  The step functions
  \begin{eqnarray}
    g_+^n(f) & := & \sum_{\alpha \in \ZZ^s} \left( \max_{x \in x_\alpha^n
        + 2^{-n} [-\frac12,\frac12]^{s }} \left| D^2 f (x) \right|_{ F }
    \right) \nonumber\\
    & & \times\,
        \chi_{[0,1]^s} (2^n \cdot - \alpha)
  \end{eqnarray}
  converge monotonically decreasing and pointwise to $\left| D^2 f
  \right|_F$ as $n \to \infty$ and satisfy $0 \le \| g_+^n(f) \|_1 =
  H_n (f)$ which is finite 
  for sufficiently large $n$. Hence, using that $0\leq \left| D^2
    f\right|_F \leq g_+^n(f)$, 
  $\left| D^2 f \right| \in L_1 (\RR^s)$. In addition,
  \begin{eqnarray}
    g_-^n(f)
    & := & \sum_{\alpha \in \ZZ^s} \left( \min_{x \in x_\alpha^n
           + 2^{-n} [-\frac12,\frac12]^{ s }} \left| D^2 f (x)
           \right|_{ F } \right) \nonumber\\
    & & \times \, \chi_{[0,1]^s} (2^n \cdot - \alpha)
  \end{eqnarray}
  is trivially bounded from below by $0$, from above by $\left| D^2
    f \right|_F $ and converges monotonically increasing to
  $\left| D^2 f \right|_F$. Since
  for any $\xi \in [0,1]$ we have that
  \begin{eqnarray}
    \lefteqn{\min_{x \in x_\alpha^n + 2^{-n} [-\frac12,\frac12]^{ s }} \left| D^2 f (x)
    \right|_{ F }} \nonumber\\
    & \le &  2^{ns}  \int_{2^{-n} [-\frac12,\frac12]^{ s }} \left| D^2 f
            \left(x_\alpha^n + \xi x \right) \right|_F \, dx \nonumber\\
    & \le & \max_{x \in x_\alpha^n + 2^{-n} [-\frac12,\frac12]^{ s }} \left| D^2 f (x)
            \right|_F,    
  \end{eqnarray}
  and since $\| g_{\pm}^{n}(f) \|_1 \to \left\| \left| D^2 f \right|_F
  \right\|_1$ as $n\rightarrow\infty$, 
  it follows that
  \begin{align}
    \lim_{n \to \infty} & \sum_{\alpha \in \ZZ^s} \int_{2^{-n}
    [-\frac12,\frac12]^{ s }} 
    \left| D^2 f \left(x_\alpha^n + \xi x \right)
    \right|_F \, dx \nonumber\\
    & = \int_{\RR^s}  \left| D^2 f (x) \right|_F \, dx
  \end{align}
  uniformly in $\xi$. In particular, there exists, for any $n_0 \in
  \NN_0$, a constant $C = C(n_0,f)$ such that
  \begin{align}
  \sum_{\alpha \in \ZZ^s} & \int_{2^{-n}
    [-\frac12,\frac12]^{ s }} 
    \left| D^2 f \left(x_\alpha^n + \xi x \right)
                            \right|_F \, dx \nonumber\\
    & \le C \int_{\RR^s}  \left| D^2 f (x) \right|_F \, dx
  \end{align}
  holds for $n \ge n_0$. Moreover,
  \begin{equation}
    \label{eq:n0Limit}
    \lim_{n_0 \to \infty} C (n_0,f) = 1.
  \end{equation}
  Under the assumption that $n \ge n_0$ this yields that
  \begin{equation}
    \label{eq:LWaveletTVCubApproxPf2}
    \sum_{\alpha \in \ZZ^s} \left| \hb_{\alpha}^n(f) \right|_2 \le
    \frac{C}{2} s^{3/2} 2^{-2n-2+ns/2} \left\| \left| D^2 f \right|_F \right\|_1.
  \end{equation}
  Multiplying \eqref{eq:LWaveletTVCubApproxPf1} by $\kappa_{n,1} =
  -2^{n(1-s/2)+2}$ we 
  get that
  \begin{eqnarray}
  \lefteqn{\left| \kappa_{n,1} \, d_{\epsilon_j,\alpha}^n (f) - 2^{-ns}
    \frac{\partial f}{\partial x_j} (x_\alpha^n) \right|} \nonumber\\
  & & \le \left| \kappa_{n,1} \right| \, \left| h_{j,\alpha}(f) \right|,
  \end{eqnarray}
  hence,
  \begin{eqnarray}
    \lefteqn{\left| \left| \hat\db_\alpha^n(f) \right|_2 - 2^{-ns} \left| \nabla
    f(x_\alpha^n) \right|_2 \right|} \nonumber\\
    & \le & \left| \hat\db_\alpha^n(f) -
            2^{-ns} \nabla f(x_\alpha^n) \right|_2  \nonumber\\
    & \le & \left| \kappa_{n,1} \right| \,\left|
    \hb_\alpha^n(f) \right|_2.
  \end{eqnarray}
  Summing this over $\alpha \in \ZZ^s$ and substituting
  \eqref{eq:LWaveletTVCubApproxPf2} we then obtain
  \begin{align}
    \nonumber
    \sum_{\alpha \in \ZZ^s} & \left| \left| \hat\db_\alpha^n(f) \right|_2 -
                              2^{-ns} \left| \nabla f(x_\alpha^n)
                              \right|_2 \right| \\ 
    \label{eq:LWaveletTVCubApproxPf3}
    & \le 2^{-n}
    C \frac{s^{3/2}}{2} \left\| \left| D^2 f \right|_F \right\|_1,
  \end{align}
  which is \eqref{eq:WaveletTVCubApprox}.
\end{proof}

\begin{remark}[Choice of~$C$ and~$n_0$]
  \label{R:n0Meaning}
  The constant~$C$ in~\eqref{eq:WaveletTVCubApprox} can be chosen as
  $\frac{s^{3/2}}{2} + \varepsilon$ for any $\varepsilon > 0$ by
  making $n_0$ sufficiently large. In practice this means to avoid
  the low-pass content of the wavelet transformation which
  approximates the gradient only in a rather poor way. Of course, the
  dependency of the constant~$C$ on $f$ would also have to be
  taken into account. 
\end{remark}

Since \eqref{eq:WaveletTVCubApprox} immediately implies for
$n \ge n_0$ that
\begin{align}
  \nonumber
  & \left| \left\| \, \left| \hat\db^n(f) \right|_2 \, \right\|_1
  - \sum_{\alpha \in \ZZ^s} 2^{-ns} \left| \nabla f (x_\alpha^n)
    \right|_2 \right| \\
    \label{eq:dbfnabApprox}
  & \qquad \le C 2^{-n} \left\| \left| D^2 f \right|_F \right\|_1,
\end{align}
the discrete sum $\left\| \left| \hat\db^n(f) \right|_2 \right\|_1$
approximates the uniform \emph{cubature formula} for the TV norm. To
complete the proof, we only
have to recall the approximation quality of the cubature formula. This
follows by standard arguments which we include for completeness.

\begin{lemma}
  \label{L:CubatureEst}
  If $f \in C^2\left(\RR^s\right)$ and $H_n (f) < \infty$ for some $n \in
  \NN_0$, then there exists, for any $n_0 \in \NN_0$, a constant~$C >
  0$ that depends only on $n_0$ and $f$ such that
  \begin{eqnarray}
    \nonumber
    \lefteqn{\left| \left\| 2^{-ns} \left| \fb_\nabla^n \right|_2 \right\|_1 -
      \| f \|_{TV} \right|} \\
    \label{eq:CubatureEst}
    & \le & C 2^{-n} \left\| \left| D^2 f \right|_F \right\|_1, 
  \end{eqnarray}
  for $ n \ge n_0$.
\end{lemma}

\begin{proof}
  Writing
  \begin{eqnarray}
    \lefteqn{\nabla f (x)} \nonumber\\
    & = & \nabla f (x_\alpha^n) + \int_0^1 D^2 f \left(x_\alpha^n + \xi (x-x_\alpha^n) \right) \nonumber\\
    & & \times (x-x_n^\alpha) \, d\xi,
  \end{eqnarray}
  we get by similar transformations as in the proof of
  Lemma~\ref{L:WaveletTVCubApprox} that
  \begin{eqnarray}
  \lefteqn{\int\limits_{x_\alpha + 2^{-n} [-\frac12,\frac12]^{ s }}
    \left| \nabla f (x) - \nabla f(x_\alpha^n) \right|_2 \, dx} \nonumber\\ 
    & \le & \int\limits_{2^{-n}
    [-\frac12,\frac12]^{ s }} \int_0^1 \left| D^2 f \left(
      x_\alpha^n + \xi x \right) \right|_F  | x |_2 \, d\xi, \nonumber\\
  \end{eqnarray}
  hence
  \begin{eqnarray}
    \label{eq:LCubatureEstPf1}
    \lefteqn{\left| 2^{-ns} \left| \nabla f(x_\alpha^n) \right|_2 -
      \int\limits_{x_\alpha + 2^{-n} [-\frac12,\frac12]^{ s }} \left| \nabla f (x)
    \right|_2 \right|} \nonumber\\
    & \le & \sqrt{s} 2^{-n-1} \int\limits_{2^{-n}
      [-\frac12,\frac12]^{ s }} \int_0^1 \left| D^2 f \left(x_\alpha^n + \xi
        x \right) \right|_F \, d\xi. \nonumber\\
  \end{eqnarray}
  By the same arguments as in Lemma~\ref{L:WaveletTVCubApprox}, there
  exist $n_0\in\NN_0$ and $C >0$ such that
  \begin{align}
    & \left| \sum_{\alpha \in \ZZ^s} 2^{-ns} \left| \nabla f(x_\alpha^n)
      \right|_2 - \int_{\RR^s} \left| \nabla f(x) \right|_2 \, dx
      \right| \nonumber\\
    & \qquad \le \frac{\sqrt{s}}{2} 2^{-n} C \left\| \left| D^2 f
        \right|_F \right\|_1,
  \end{align}
  giving \eqref{eq:CubatureEst}.
\end{proof}

Now it is easy to complete the proof of the main theorem.

\begin{proof}[Proof of Theorem~\ref{T:WaveletTVApprox}]
  Combining \eqref{eq:dbfnabApprox} and \eqref{eq:CubatureEst}, the
  triangle inequality gives, for $n \ge n_0$,
  \begin{equation}
  \left| \left\| \left| \hat\db^n(f) \right|_2 \right\|_1 - \| f
    \|_{TV} \right| \le C 2^{-n} \left\| \left| D^2 f \right|_F
  \right\|_1,
  \end{equation}
  where the constant can be anything of the form
  \begin{equation}
  \frac{s^{3/2}}{2} + \frac{s^{1/2}}{2} + \varepsilon = \sqrt{s}
  \frac{s+1}{2} + \varepsilon, \qquad \varepsilon > 0,
  \end{equation}
  by selecting a sufficiently high value for $n_0$.
\end{proof}

\subsection{Estimation over several levels}
\label{sec:SevLev}

The estimate in~\eqref{eq:WaveletTVApprox} holds for any sufficiently
high wavelet level separately and the error of the estimate decreases
with $n$, so it might appear reasonable to approximate the TV norm
just by the maximal level. Unfortunately, wavelet coefficients of high
level are most affected by high frequency noise which would lead to an
overdetection of gradients. Therefore, it makes sense to incorporate
also wavelet coefficients of lower resolution.

Indeed, in practical applications one
starts with finite data on the finest resolution~$n_1+1$, i.e.,
\begin{equation}
c_\alpha^{n_1+1} (f) := \int_{\RR^s} f(x) \, \psi_{0,\alpha}^{n_1+1} (x)
\, dx, \qquad \alpha \in \ZZ^s,
\end{equation}
and then computes the coefficients in the wavelet decomposition
\begin{align}
  \sum_{\alpha \in \ZZ^s}
  & c_\alpha^{n_1+1} (f) \,
    \psi_{0,\alpha}^{n_1+1} \nonumber\\
  & = \sum_{k=0}^{n_1} \sum_{\theta \in \{0,1\}^s
    \setminus \{0\}} \sum_{\alpha \in \ZZ^s} d_{\theta,\alpha}^{k} (f) \,
    \psi_{\theta,\alpha}^{k}.
\end{align}
To get an approximation of the TV norm that uses as many levels as possible
at the same time, namely $\hat\db^n(f)$, $n = n_0,\dots,n_1$, we make
use of suitable averaging to obtain almost the same rate of
accuracy as by the highest level alone.

\begin{proposition}
  \label{P:AverageTV}
  If, under the assumptions of Theorem~\ref{T:WaveletTVApprox}, we
  define
  \begin{equation}
    \label{eq:AverageTVdb}
    \mu_n := \frac{2^{n-n_1}}{2-2^{n_0-n_1}}, \quad n = n_0,\dots,n_1,    
  \end{equation}
  then
  \begin{eqnarray}
    \nonumber
    \lefteqn{\left| {\sum_{n=n_0}^{n_1} \mu_n \, \left\| \left|\hat\db^n(f)
    \right|_2 \right\|_1} - \| f \|_{TV} \right|}\\
    \label{eq:AverageTV1Est}
    & \le & C
    {(n_1+1-n_0)} 2^{-n_1} \left\| \left| D^2 f \right|_F \right\|_1.
  \end{eqnarray}
\end{proposition}

\begin{proof}
  By construction,
\begin{equation}
\sum_{n=n_0}^{n_1} \mu_n = 1,
\end{equation}
and we thus have that
  \begin{eqnarray}
    \lefteqn{ \left|  \sum_{n=n_0}^{n_1} \mu_n \left\|  \left| \hat\db^n(f)
          \right|_2 \right\|_1 - \| f
    \|_{TV} \right|} \nonumber\\
    & = & \left| \sum_{\alpha\in\ZZ^s} \sum_{n=n_0}^{n_1} \mu_n \left|
          \hat\db^n_\alpha(f) \right|_2 - \sum_{n=n_0}^{n_1} \mu_n \|
          f \|_{TV} \right| \nonumber\\
    & \le & \sum_{n=n_0}^{n_1} \mu_n \left| \left\|
          \left| \hat\db^{ n }(f) \right|_2 \right\|_1 - \| f \|_{TV} \right| \nonumber\\
    & \le & \frac{C}{2 - 2^{n_0-n_1}} \left\| \left| D^2 f \right|_F
            \right\|_1 \sum_{n=n_0}^{n_1} 2^{n-n_1} 2^{-n} \nonumber\\
    & \le & (n_1+1-n_0) C 2^{-n_1} \left\| \left| D^2 f \right|_F
            \right\|_1,
  \end{eqnarray}
  which is~\eqref{eq:AverageTV1Est} with $C$ being the constant in
  Theorem~\ref{T:WaveletTVApprox} for initial
  level $n_0$.
\end{proof}

In fact, \emph{any} averaging of ${\left|\hat\db^n(f)\right|_2}$,
${n=n_0,\ldots,n_1}$, 
would yield an approximation for the TV norm, but the
particular choice of the weights in~\eqref{eq:AverageTVdb} ensures
that the rate of convergence obtained by this averaging process is the same
as that on the highest level $n_1$, 
only affected by the ``logarithmic'' number~$n_1-n_0$ of the levels
incorporated in the approximation process.

\section{Approximate TV regularization}
\label{sec:TVReg}

\emph{TV regularization} is a standard procedure for many imaging
applications nowadays, especially for denoising. It consists of solving,
for a given image~$f$,
an optimization problem of the basic form
\begin{equation}
  \label{eq:TVRegProb}
  \min_u \frac12 \left\| f - u \right\|_2^2 + \lambda \| u \|_{TV},
  \qquad \lambda > 0,
\end{equation}
where the regularization term $\lambda \| u \|_{TV}$ encourages a smooth
or less noisy behavior of~$u$ whose influence is controlled by the
parameter~$\lambda$. In most applications, $\| u \|_{TV}$ is
computed for discrete data~$u$ by numerical differentiation, usually by
means of differences. In particular, this not only requires access to
the full image, but the discrete gradient needs an additional amount of
$s$ times the memory consumption of the original image. Since this is
unacceptable in realistic applications, where the image is larger than
the available memory, the straightforward approach is to use the wavelet
coefficients as a computationally efficient approximation for $\| u \|_{TV}$.

A relaxation of the optimization problem~\eqref{eq:TVRegProb} can be solved
explicitly by standard methods that we are going to explain now. To
that end, we assume that $f$ and $u$ are given as 
finite orthonormal wavelet expansions
\begin{eqnarray}
  f & = & \sum_{\alpha \in \ZZ^s} c_\alpha (f) \, \psi_{0,\alpha} \nonumber\\
  & & +
      \sum_{n=0}^{n_1} \sum_{\theta \neq 0}
      \sum_{\alpha \in \ZZ^s} d_{\theta,\alpha}^n (f) \,
      \psi_{\theta,\alpha}^n, \nonumber\\
  u & = & \sum_{\alpha \in \ZZ^s} c_\alpha (u) \, \psi_{0,\alpha} \nonumber\\
  & & +
          \sum_{n=0}^{n_1} \sum_{\theta \neq 0}
          \sum_{\alpha \in \ZZ^s} d_{\theta,\alpha}^n (u) \,
          \psi_{\theta,\alpha}^n.
\end{eqnarray}
The first term in~\eqref{eq:TVRegProb} can be differentiated with
respect to the wavelet coefficients yielding
\begin{eqnarray}
  \frac{\partial}{\partial c_\alpha (u)} \frac12 \| f - u \|_2^2
  & = & {c_\alpha (u) - c_\alpha (f)}, \nonumber\\
  \frac{\partial}{\partial  d_{\alpha,\theta}^n (u)} \frac12 \| f - u \|_2^2
  & = & {d_{\alpha,\theta}^n (u) - d_{\alpha,\theta}^n (f)}.\quad
\end{eqnarray}
For the regularization term we now use the approximation from
Proposition~\ref{P:AverageTV} for $\| u \|_{TV}$, i.e., we solve the
approximate problem 
\begin{equation}
  \label{eq:TVRegAlternative}
  \min_{u} \frac12 \| f - u \|_2^2 + \lambda \sum_{n=n_0}^{n_1} \mu_n
  \, \left\| \hat\db^n (u) \right\|_1
\end{equation}
with the regularization term
\begin{equation}
F(u) =
\sum_{\alpha \in \ZZ^s} \sum_{n=n_0}^{n_1} \mu_n \left| \hat\db_\alpha^n (u) \right|_2,  
\end{equation}
whose (generally set-valued) subgradient is composed, for $\alpha \in
\ZZ^s$, and $n=n_0,\dots,n_1$, of
\begin{equation}
\partial_{\hat\db_\alpha^n (u)} F(u) = \mu_n
\begin{cases}
  B_1 (0), & \hat\db_\alpha^n(u) = 0, \\
  \left\{\dfrac{\hat\db_\alpha^n(u)}{\left| \hat\db_\alpha^n(u) \right|_2}\right\}, & \hat\db_\alpha^n(u)
  \neq 0, \\ 
\end{cases}
\end{equation}
where $B_1 (0) := \{ x \in \RR^s : |x|_2 \le 1 \}$ denotes the unit ball.
A necessary and sufficient condition for $u$ to be a solution of the
convex optimization problem~\eqref{eq:TVRegAlternative} is that
\begin{equation}
0 \in \partial \left( \frac12 \| f - \cdot \|_2^2 + \lambda \left\| {\hat\db (\cdot)
    } \right\|_1 \right) (u),
\end{equation}
cf.~\cite{Rockafellar70}, 
which is equivalent to setting all coefficients in the wavelet
expansion of $u$ equal to those of $f$, except the nontrivial conditions
\begin{equation}
  \label{eq:ApproxOptProbSol2}
  0 \in \left( \hat\db_\alpha^n (f) - \hat\db_\alpha^n (u)
  \right) + \lambda \partial_{\hat\db_\alpha^n (u)} F(u)
\end{equation}
for $\alpha \in \ZZ^s$, $n = n_0,\dots,n_1$. 
\eqref{eq:ApproxOptProbSol2} means that
\begin{equation}
\hat\db_\alpha^n (f)
\in \hat\db_\alpha^n (u) + \mu_{n} \, \lambda \,
\partial_{\hat\db_\alpha^n (u)} F(u),
\end{equation}
i.e., either
\begin{equation}
  \label{eq:SubdiffSoftThresh}
  \hat\db_\alpha^n (f) \in \mu_n \lambda B_1 (0)
\end{equation}
if $\hat\db_\alpha^n (u) = 0$, or
\begin{equation}
  \label{eq:SubdiffSoftThresh2}
  \hat\db_\alpha^n (f) = \left( 1 + \lambda \dfrac{\mu_n}{\left|
        \hat\db_\alpha^n (u) \right|_2} \right) \hat\db_\alpha^n (u)
\end{equation}
if $\hat\db_\alpha^n (u) \neq 0$.
Solving \eqref{eq:SubdiffSoftThresh} and \eqref{eq:SubdiffSoftThresh2}
for the coefficients of $u$, we get the explicit representation of the
solution of \eqref{eq:TVRegAlternative}. Note that this will be a
block thresholding operation and that the thresholding is in terms of
the \emph{length} of a whole \emph{vector} of wavelet coefficients.

\begin{proposition}
  The solution of \eqref{eq:ApproxOptProbSol2} can be computed by
  \emph{soft thresholding} of the \emph{length} of the normalized
  coefficient vectors 
  $\db_\alpha^n (f)$, i.e., 
  as
  \begin{equation}
    \label{eq:SoftThreshSol}
    \db_\alpha^n (u) = \left( 1 - \frac{\mu_n \lambda}{\left| \db_\alpha^n
          (f) \right|_2} \right)_{\hspace{-0.2em}+} \db_\alpha^n (f),
  \end{equation}
  for $ \alpha \in \ZZ^s$, $n=n_0,\dots,n_1$.
\end{proposition}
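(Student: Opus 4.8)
The plan is to solve the decoupled inclusion \eqref{eq:SubdiffSoftThresh} explicitly at each fixed $\alpha\in\ZZ^s$ and $n\in\{n_0,\dots,n_1\}$; this is the classical vector-valued soft-thresholding (proximity operator) computation, and \eqref{eq:SoftThreshSol} then follows simply by unwinding the renormalization \eqref{eq:dbalphaDef}. To lighten the notation I would set $\gb := \hat\db_\alpha^n(f)$, $\vb := \hat\db_\alpha^n(u)$ and $t := 2^{n(1-s/2)+2}\mu_n\lambda > 0$, and read $B_1(0)$ as the \emph{closed} Euclidean unit ball, i.e.\ the subdifferential of $|\cdot|_2$ at the origin. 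With these abbreviations \eqref{eq:SubdiffSoftThresh} (equivalently \eqref{eq:ApproxOptProbSol2}) states that either $\vb = 0$ and $|\gb|_2 \le t$, or $\vb \neq 0$ and $\gb = (1 + t/|\vb|_2)\,\vb$.

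Next I would verify that $\vb := (1 - t/|\gb|_2)_+\,\gb$ is the unique solution of this inclusion. If $|\gb|_2 \le t$, the clipped factor is zero, so $\vb = 0$ and the required membership $\gb \in t\,B_1(0)$ holds; moreover the nonzero branch is then impossible, since $\gb = (1+t/|\vb|_2)\vb$ would force $|\gb|_2 = |\vb|_2 + t > t$. If $|\gb|_2 > t$, then $\vb = (1 - t/|\gb|_2)\gb$ is a positive multiple of $\gb$ with $|\vb|_2 = |\gb|_2 - t > 0$, and a one-line computation gives $(1 + t/|\vb|_2)\vb = \frac{|\gb|_2}{|\gb|_2 - t}\cdot\frac{|\gb|_2 - t}{|\gb|_2}\,\gb = \gb$, so the inclusion is satisfied. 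Uniqueness follows because in the branch $\vb \neq 0$ the relation $\gb = (1+t/|\vb|_2)\vb$ forces $\vb$ to be a positive multiple of $\gb$ of prescribed length $|\gb|_2 - t$, which pins down $\vb$; and when $|\gb|_2\le t$ only $\vb=0$ is admissible. Hence at each pair $(\alpha,n)$ the inclusion has exactly one solution, given by this formula.

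Finally I would translate the result back into the unnormalized coefficients $\db_\alpha^n$. Since $2^{n(1-s/2)+2} > 0$, \eqref{eq:dbalphaDef} gives $\hat\db_\alpha^n(\cdot) = -2^{n(1-s/2)+2}\db_\alpha^n(\cdot)$ and $|\hat\db_\alpha^n(f)|_2 = 2^{n(1-s/2)+2}|\db_\alpha^n(f)|_2$. Substituting these into $\hat\db_\alpha^n(u) = (1 - t/|\hat\db_\alpha^n(f)|_2)_+\,\hat\db_\alpha^n(f)$, the common positive factor $-2^{n(1-s/2)+2}$ cancels on both sides, and inside the threshold ratio the two factors $2^{n(1-s/2)+2}$ cancel against each other, leaving $\db_\alpha^n(u) = (1 - \mu_n\lambda/|\db_\alpha^n(f)|_2)_+\,\db_\alpha^n(f)$, which together with \eqref{eq:ApproxOptProbSol1} is exactly the asserted representation \eqref{eq:SoftThreshSol}. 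The argument is entirely elementary; the only points that call for a little care are reading $B_1(0)$ as the closed ball (so that the case $\vb=0$ is an inclusion, not an equation) and the short uniqueness argument in the nonzero branch, so I do not expect any substantial obstacle.
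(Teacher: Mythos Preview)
Your proposal is correct and follows essentially the same route as the paper: a two-case analysis of the subdifferential inclusion \eqref{eq:SubdiffSoftThresh} to obtain $\hat\db_\alpha^n(u) = (1 - t/|\hat\db_\alpha^n(f)|_2)_+\,\hat\db_\alpha^n(f)$, followed by the cancellation of the renormalization factor from \eqref{eq:dbalphaDef}. Your version is in fact slightly more careful than the paper's, since you supply an explicit uniqueness argument and correctly flag that $B_1(0)$ must be read as the closed unit ball rather than the sphere; the only cosmetic slip is calling the scalar $-2^{n(1-s/2)+2}$ ``positive'' when you mean it is a common nonzero factor.
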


\begin{proof}
  The case \eqref{eq:SubdiffSoftThresh} describes
  $\hat\db_\alpha^n (u) = 0$ and is equivalent to $\left|
    \hat\db_\alpha^n (f) \right|_2 \le \mu_n{\lambda}$.
  Otherwise, \eqref{eq:SubdiffSoftThresh2} yields
  $\left| \hat\db_\alpha^n (f) \right|_2 = \left| \hat\db_\alpha^n (u)
  \right|_2 + \mu_n \lambda$ and therefore
  \begin{eqnarray}
    \hat\db_\alpha^n (f)
    & = & \left( 1 + \frac{\mu_n \lambda}{\left| \hat\db_\alpha^n (u)
          \right|_2} \right) \hat\db_\alpha^n (u) \nonumber\\
    & = & \left( 1 + \frac{\mu_n \lambda}{\left| \hat\db_\alpha^n (f)
          \right|_2 - \mu_n \lambda} \right) \hat\db_\alpha^n (u)\quad
  \end{eqnarray}
  can be easily checked to have the solution
  \begin{equation}
  \hat\db_\alpha^n (u) = \left( 1 - \dfrac{\mu_n \lambda}{\left|
        \hat\db_\alpha^n (f) \right|_2} \right) \hat\db_\alpha^n (f),
  \end{equation}
  which is \eqref{eq:SoftThreshSol}. 
\end{proof}

One could also approximate the TV norm in~\eqref{eq:TVRegProb} by a
single set of wavelet coefficients on some level~$n$ and solve
\begin{equation}
\min_u \frac12 \| f - u \|_2 + \lambda \left\| \left| \hat\db^n (u)
  \right|_2 \right\|_1
\end{equation}
by
\begin{equation}
  \label{eq:SimpleShrink}
  \db_\alpha^n (u) = \left( 1 - \frac{\lambda}{\left|
        \db_\alpha^n (f) \right|_2} \right)_{\hspace{-0.2em}+} \db_\alpha^n (f), \qquad
  \alpha \in \ZZ^s,  
\end{equation}
only applied to coefficients of level $n$. Also, the weights $\mu_n$
can be chosen arbitrarily.

To conclude, we again make a short comparison
with~\cite{steidl02:_relat_soft_wavel_shrin_total_variat_denois,welk08:_local}
where wavelet shrinkage applied to \emph{each wavelet coefficient
  separately} is related to diffusion filtering 
processes. Nevertheless, 
our findings show that for an approximation of the TV functional the
shrinkage process has to be adapted: 
\begin{enumerate}
\item The shrinkage has to be applied to the \emph{length} of the
  vectors~$\db_\alpha^n (f)$ and not to its components
  separately. Also only those coefficients have to be taken into
  account that contain a single wavelet component, as already mentioned
  at the beginning of Section~\ref{sec:TVApprox}.
\item The coefficients have to be properly renormalized and this
  renormalization depends on the level of the wavelet
  coefficients and the dimensionality of the problem.
\item It matters that we use \emph{Haar} wavelets here. The
  reconstruction of first derivatives requires, at least in our
  approach, univariate wavelets with only a \emph{single} vanishing
  moment, and the simple explicit expressions for proper
  renormalization are even due to the explicit nature and support size
  of Haar wavelets.  
\end{enumerate}
Under these conditions, we can give precise error estimates for the
procedure that works entirely on the wavelet coefficients.

\section{Applications to volume data}
\label{sec:Applications}

We finally give some numerical results obtained from applying the
method to a dataset from industrial computed tomography.

\subsection{Example dataset}
\begin{figure}
  \begin{center}
    \framebox{\includegraphics[width=\imagewidthincolumn]{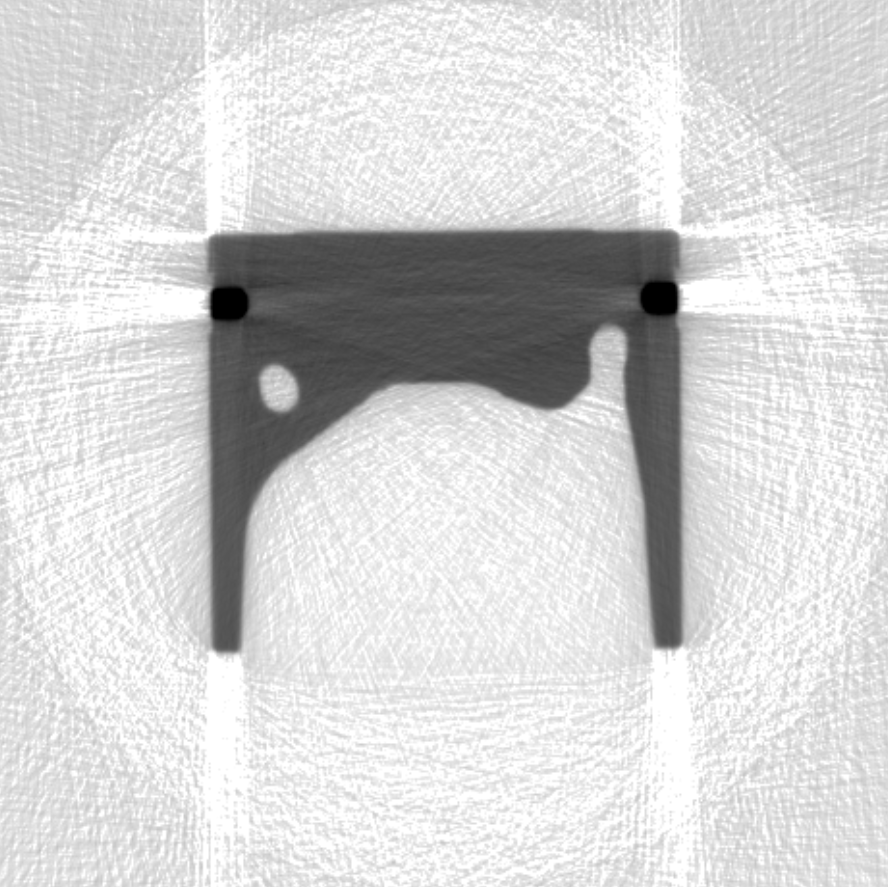}}
  \end{center}
  \caption{Slice view of a CT scan of a motor piston (courtesy of Mahle GmbH) with additional gamma correction for emphasizing the presence of noise}
  \label{applications:piston:pictures}
\end{figure}

To illustrate the results, we will use a typical industrial dataset, a Mahle
motor piston, as a running example. In Fig.~\ref{applications:piston:pictures},
we see a single 2D slice view of that data, where the different materials are
visible: surrounding (noisy) air, styrofoam (piston fixation), aluminum (piston
body) and iron (ring). The dataset itself is of size~${\num{464}\times \num{464}\times \num{414}}$
voxels and contains nonnegative $16$-bit integer values. Due to its
well-distinguishable materials and the fact that, by means of discrete
differences, we can also compute its TV norm explicitly for comparison, it is
nevertheless useful for illustration purposes despite its small size.
\begin{figure}
	\begin{center}
		\framebox{\includegraphics[width=\imagewidthincolumn]{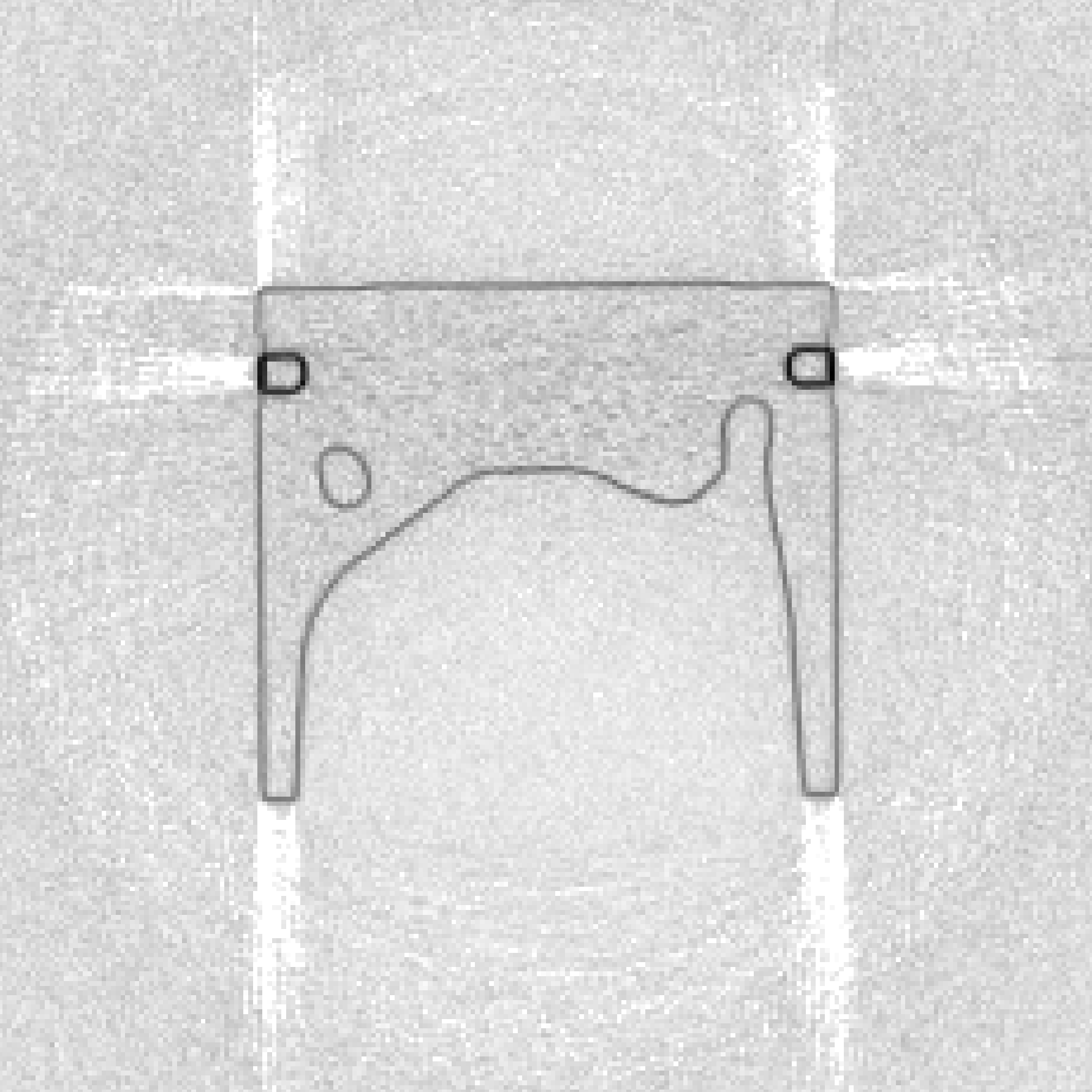}}
	\end{center}
	\caption{Slice view of the gradient magnitudes estimated from the Haar wavelet coefficients of the motor piston dataset with additional gamma correction and thresholding to balance visibility of material transitions and noise}
	\label{applications:piston:gradients}
\end{figure}
As described in Section~\ref{sec:Gradients}, the single-wavelet-component coefficients can be used for approximating the gradients, see Fig.~\ref{applications:piston:gradients}.

\subsection{Numerical examples and heuristics}
Thresholding on the wavelet coefficients with $\theta = \epsilon_j$
clearly has its limitations that will become visible when applying
very strong regularization to a dataset as the approach thresholds
only $s$ blocks of coefficients, while the other $2^s - s - 1$ ones
with $|\theta| > 1$ remain unaffected. This phenomenon becomes more
and more prominent in higher dimensions and is very well observable
for $s=3$ already.

To compensate this behavior, we propose a \emph{heuristic sparsification}
in the following way: whenever the vector $\db_\alpha^n (f)$ is thresholded
to zero for some index~$\alpha\in\ZZ^s$ and level~$n\in\NN_0$,
we set \emph{all} wavelet coefficients $d^n_{\theta,\alpha}(f)$,
$\theta\in\{0,1\}^s\setminus\{0\}$, to zero and not only $d^n_{\epsilon_j,\alpha}(f)$,
$j=1,\ldots,s$, which make up the vector $\db^n_{\alpha}(f)$. 
The rationale behind this heuristic is, on the one hand, the assumption of
locally homogeneous data combined with the observation stated in
Remark~\ref{R:CoeffDecayTheta}, that in such a situation, the wavelet
coefficients with $|\theta| > 1$ should decay faster than those with
$|\theta| = 1$. On the other hand, higher order wavelet coefficients are
usually more sensitive to noise and therefore they may be nonzero just
because of higher order reactions to noise.

\subsection{Results}
\begin{table}[t]
  \begin{tabular}{c|c|c|c|c}
    $\lambda$ & $10^2$ & $10^3$ & $10^4$ & $10^5$ \\
    \hline
    $\| u \|_{TV} / \| f \|_{TV}$ & $97\%$ & $81\%$ & $72\%$ & $76\%$ \\
    $\frac{\sum \mu_n \left\| \hat\db^n (u) \right\|_1}{\sum \mu_n \left\| \hat\db^n (f) \right\|_1}$  & $93\%$ & $49\%$ & $20\%$ & $8.5\%$ \\
    $\| f-u \|_2 / \|f\|_2$ & $0.38\%$ & $2.9\%$ & $6.5\%$ & $13\%$ \\
    PSNR & $71$ & $53$ & $46$ & $41$ \\
    Zeros & $19\%$ & $40\%$ & $52\%$ & $53\%$ \\
    \hline
  \end{tabular}
  \caption{Results (up to two digits of accuracy) of approximate TV for multiple
   regularization parameters, relative to the values for the original data,
   e.g., to $\| f \|_{TV}$. ``Zeros'' stands for wavelet
    coefficients that have been thresholded to zero}
  \label{applications:piston:experiments:livetv}
\end{table}
\begin{table}[t]
  \begin{tabular}{c|c|c|c|c}
    $\lambda$ & $10^2$ & $10^3$ & $10^4$ & $10^5$ \\
    \hline
    $\| u \|_{TV} / \| f \|_{TV}$ & $97\%$ & $79\%$ & $55\%$ & $53\%$\\
    $\frac{\sum \mu_n \left\| \hat\db^n (u) \right\|_1}{\sum \mu_n \left\| \hat\db^n (f) \right\|_1}$  & $93\%$ & $49\%$ & $20\%$ & $8.5\%$ \\
    $\| f-u \|_2 / \|f\|_2$ & $0.41\%$ & $3.4\%$ & $7.3\%$ & $13\%$ \\
    PSNR & $70$ & $52$ & $45$ & $40$ \\
    Zeros & $20\%$ & $69\%$ & $98\%$ & $99.8\%$ \\
    \hline
  \end{tabular}
\caption{Results (up to two digits of accuracy) obtained by also zeroing
  coefficients with $|\theta| > 1$ if all coefficients with $|\theta| = 1$
  are thresholded to zero, affecting all metrics except the Haar-wavelet-based
  TV approximation. Note that the behavior of the TV norm is now more similar
  to the behavior of the TV approximation}
\label{applications:piston:experiments:sparsetv}
\end{table}
Given the motor piston dataset~$f$, we compare the solutions to the approximate TV minimization problem~$u$ for different parameters~$\lambda$ satisfying~\eqref{eq:SoftThreshSol} in Tables~\ref{applications:piston:experiments:livetv}
and~\ref{applications:piston:experiments:sparsetv}. The used metrics consider data fidelity, regularization performance, and memory efficiency, also normalized with respect to the corresponding values of the original data~$f$.
The proposed method does indeed lower the approximate wavelet TV
norm. However, it seems to not influence all the coefficients
necessary to reduce the TV norm in the standard basis.
The heuristic sparsification, on the other hand, shows the same reduction of the
approximate wavelet TV norm but also consistently leads to a smaller TV
norm in the standard basis, and a higher overall percentage of coefficients of value zero as well.
\begin{figure}
  \begin{center}
    \framebox{\includegraphics[width=\imagewidthincolumn]{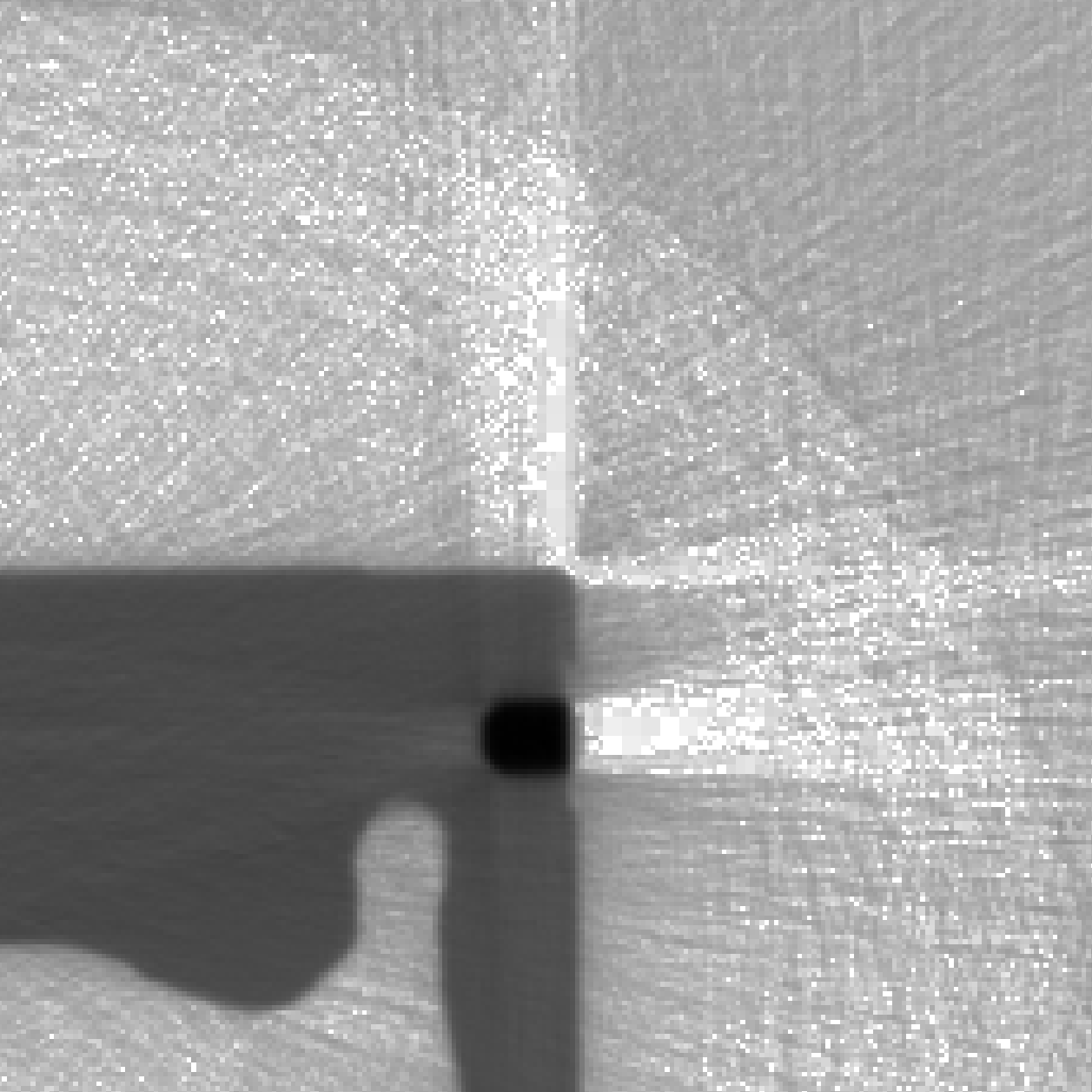}} \imagelinebreak
    \framebox{\includegraphics[width=\imagewidthincolumn]{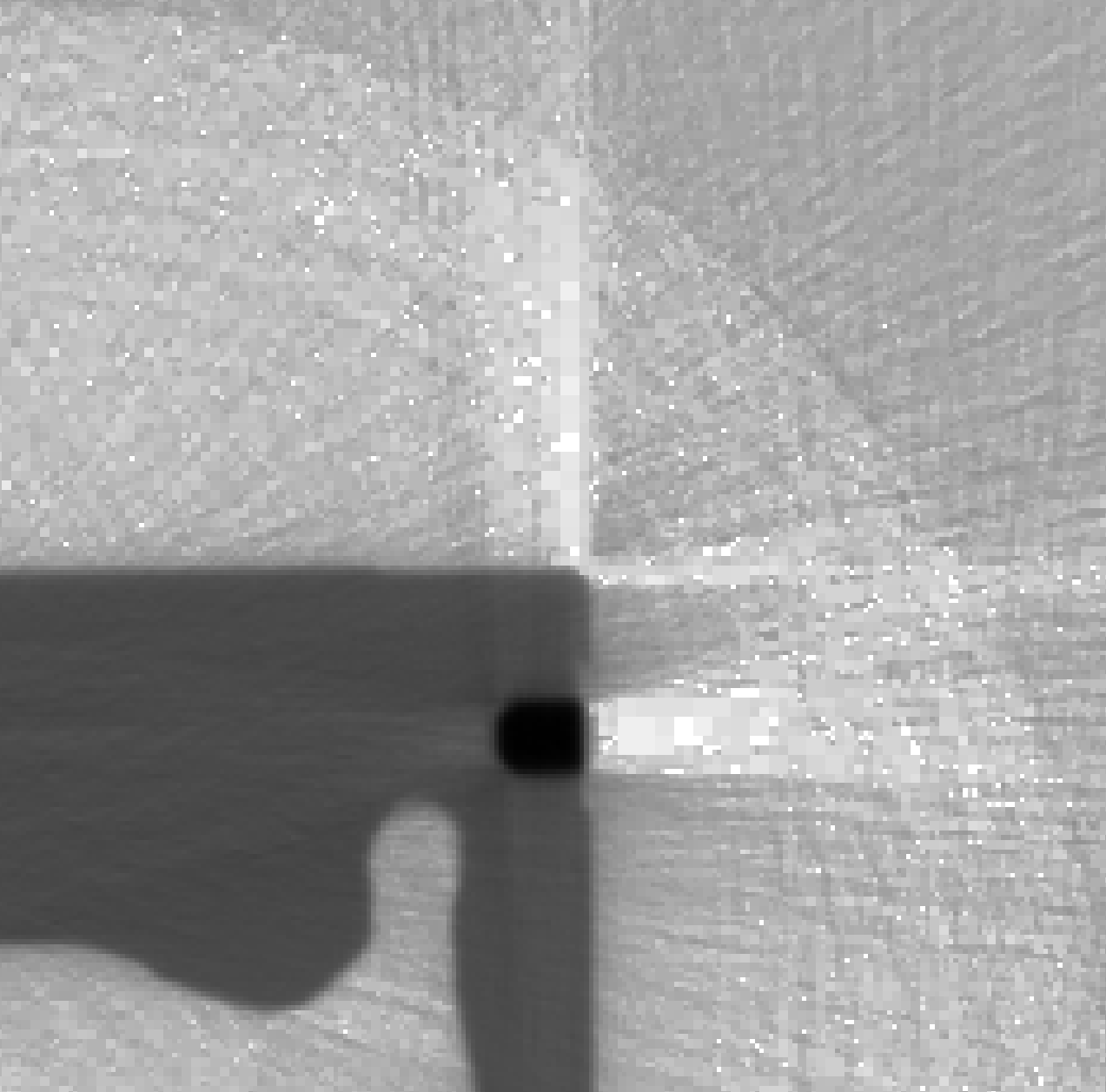}}
  \end{center}
  \caption{\emph{Top}: proposed method. \emph{Bottom}: including heuristic sparsification. In both cases, the parameter~$\lambda=10^3$ is used and only a part of the center XY slice is shown. Gamma correction was applied to highlight the noise details}
  \label{applications:piston:experiments:sparsetv-versus-livetv-lambda10e3}
\end{figure}
\begin{figure}
\begin{center}
  \framebox{\includegraphics[width=\imagewidthincolumn]{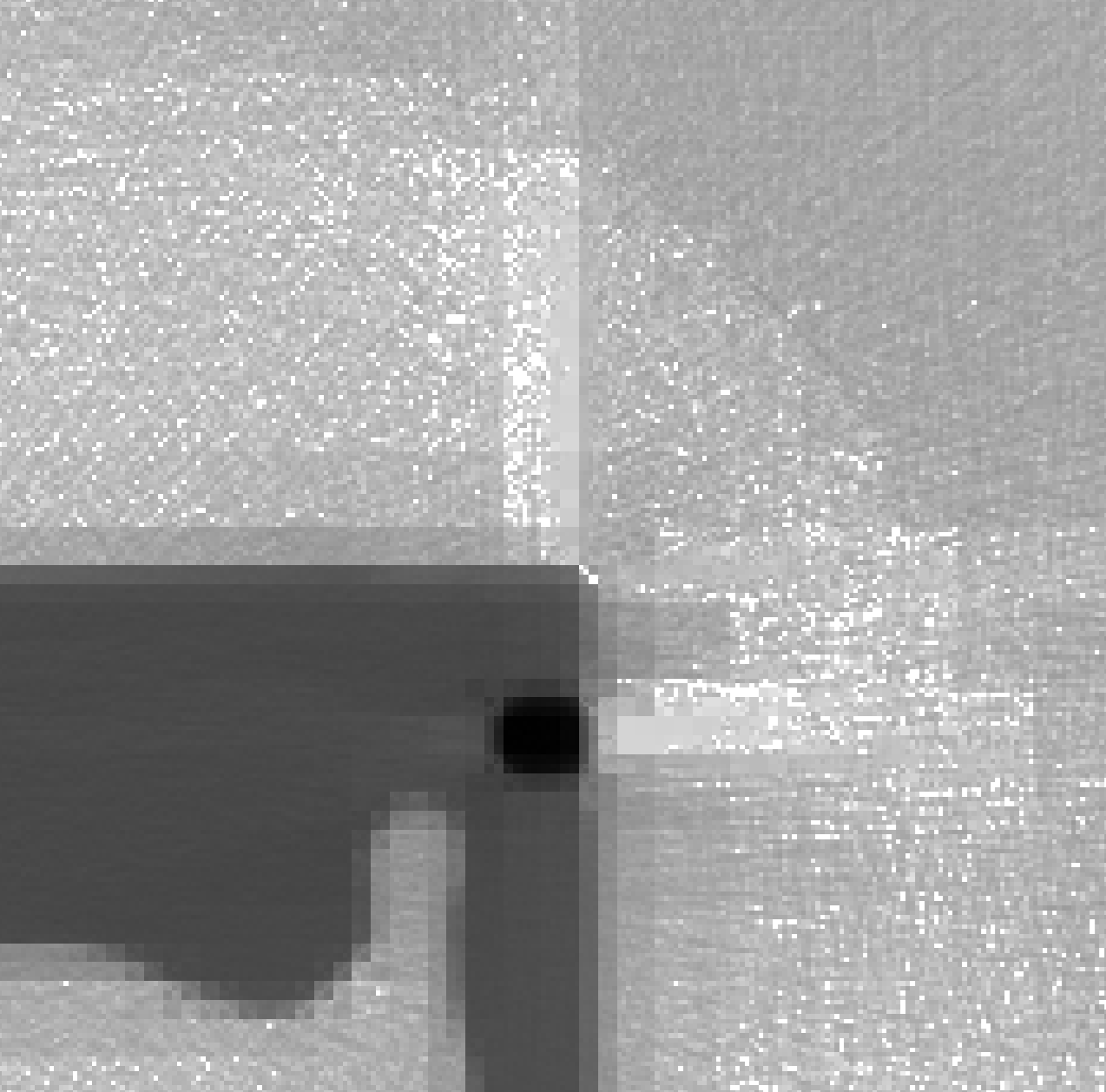}} \imagelinebreak
  \framebox{\includegraphics[width=\imagewidthincolumn]{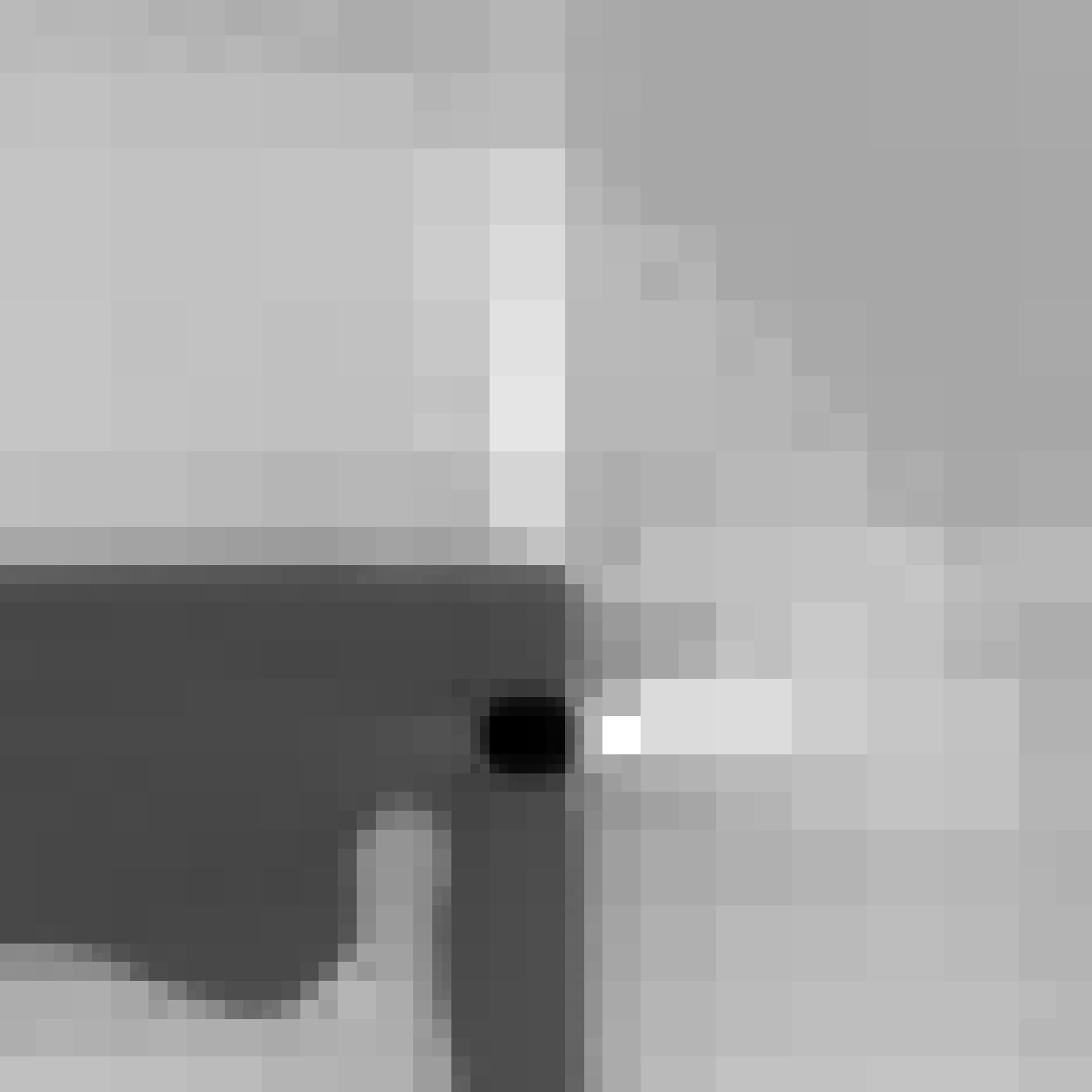}}
\end{center}
\caption{\emph{Top}: proposed method. \emph{Bottom}: including heuristic sparsification. In both cases, the parameter~$\lambda=10^5$ is used and only a part of the center XY slice is shown. Gamma correction was applied to highlight the noise details}
\label{applications:piston:experiments:sparsetv-versus-livetv-lambda10e5}
\end{figure}
Fig.~\ref{applications:piston:experiments:sparsetv-versus-livetv-lambda10e3} and~\ref{applications:piston:experiments:sparsetv-versus-livetv-lambda10e5} indicate that, for larger thresholds, the reduction of noise texture is stronger when using the heuristic sparsification while the image quality at material transitions is comparable.
The three-dimensional views show similar noise texture properties.

\begin{figure}
	\begin{center}
		\framebox{\includegraphics[width=\imagewidthincolumn]{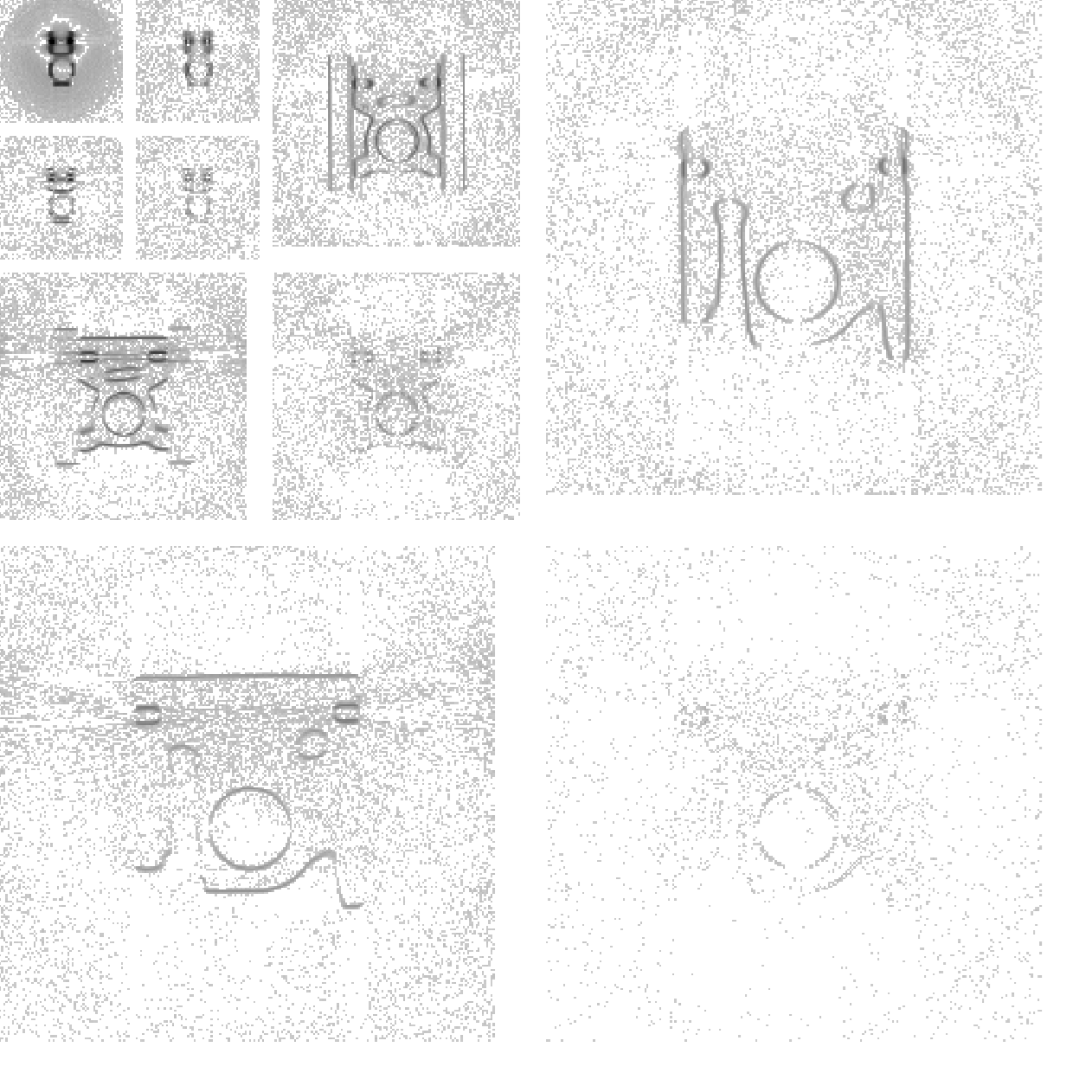}} \imagelinebreak
		\framebox{\includegraphics[width=\imagewidthincolumn]{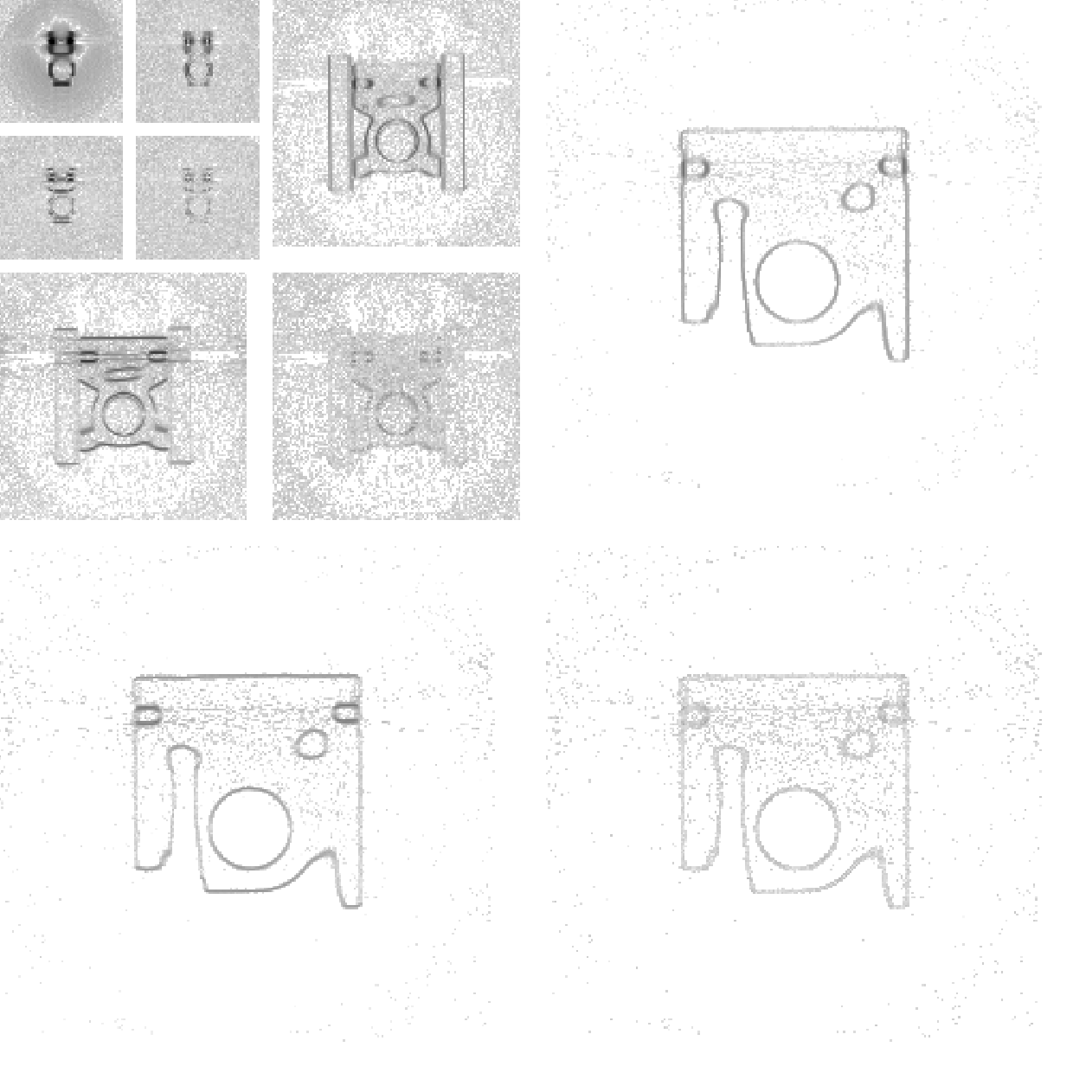}}
	\end{center}
	\caption{Slice through absolute wavelet coefficients of the motor piston dataset after soft thresholding (top) and after regularization via the heuristic sparsification (bottom), removing more noise surrounding the piston. Furthermore, all edges are equally present for all wavelet functions in all directions. Gamma correction was applied to both images to highlight the noise details}
	\label{applications:piston:experiments:sparsetv-versus-hardthresholding-coefficients}
\end{figure}
\begin{figure}
  \begin{center}
    \framebox{\includegraphics[width=\imagewidthincolumn]{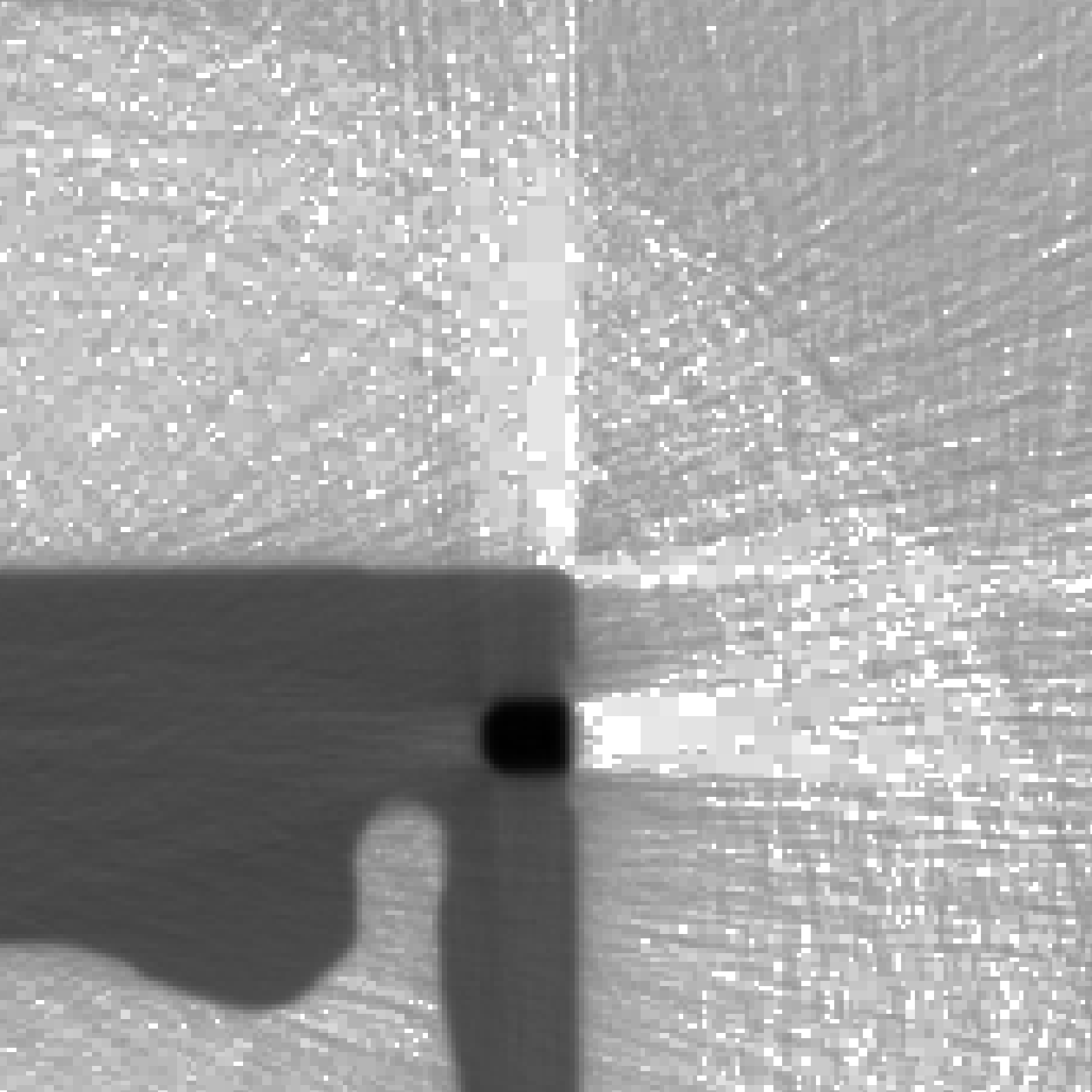}} \imagelinebreak
    \framebox{\includegraphics[width=\imagewidthincolumn]{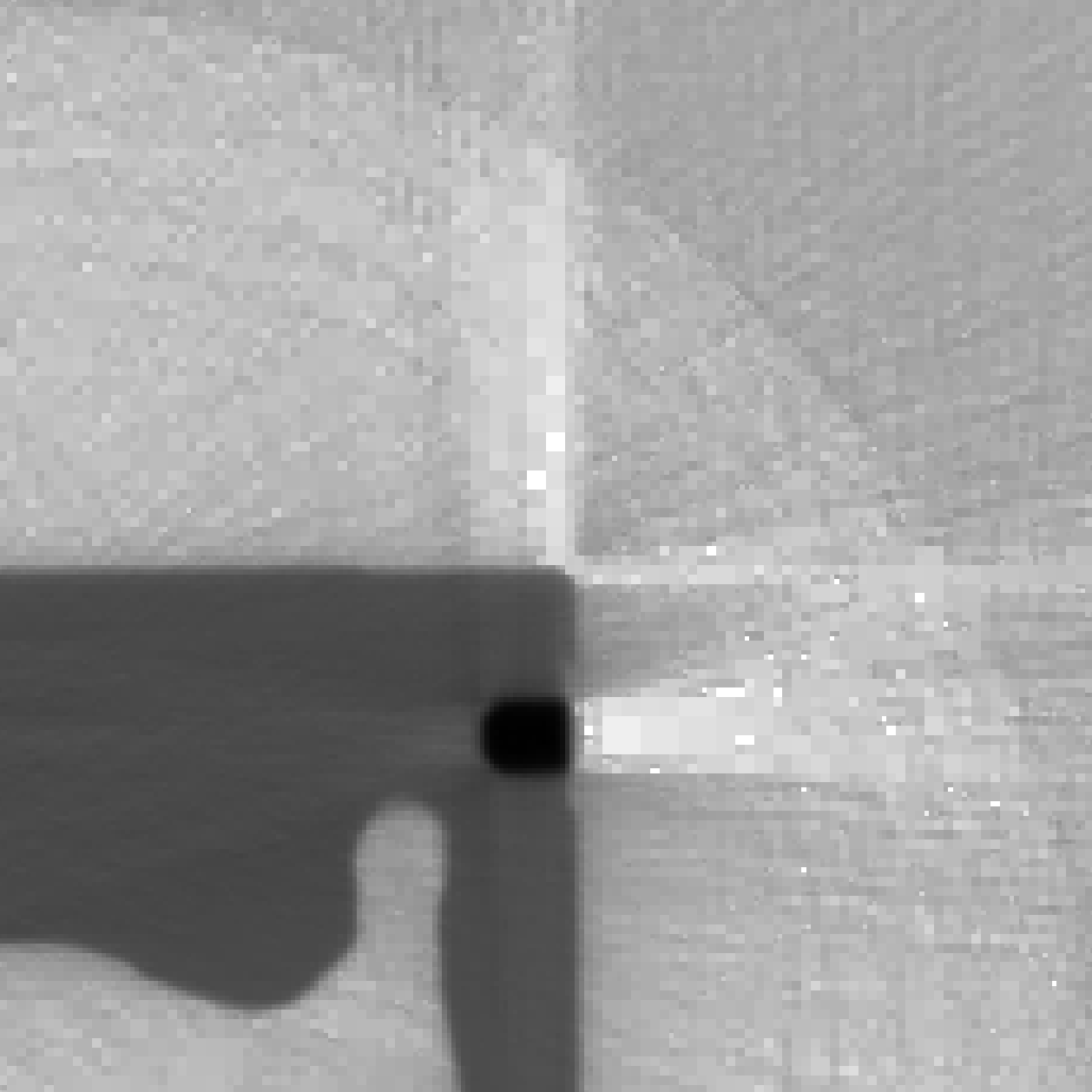}}
  \end{center}
  \caption{Reconstructed motor piston slice region after soft thresholding (top), and after heuristic sparsification (bottom), exhibiting less speckle-like noise in different resolutions, and similar sharpness at the \emph{important} edges at the borders between air and aluminum. 
  	Both reconstructions are based on the wavelet data that is visualized in Fig.~\ref{applications:piston:experiments:sparsetv-versus-hardthresholding-coefficients}. Gamma correction was applied to both images to highlight the noise details}
  \label{applications:piston:experiments:sparsetv-versus-hardthresholding}
\end{figure}

In this practical example, the heuristic sparsification also leads to less speckle-like phenomena regarding the surrounding noise compared to soft thresholding, where the coefficients are manipulated independently, which can be seen in Fig.~\ref{applications:piston:experiments:sparsetv-versus-hardthresholding-coefficients} and~\ref{applications:piston:experiments:sparsetv-versus-hardthresholding}.
The additional heuristic sparsification addresses this kind of discontinuous behavior, which is typical for componentwise thresholding of tensorized Haar wavelet coefficients, by considering \emph{all} wavelet coefficients $d^n_{\theta,\alpha}(f)$, $\theta\in\{0,1\}^s\setminus\{0\}$, at once for some index~$\alpha\in\ZZ^s$ and level~$n\in\NN_0$ instead.

\subsection{Timing}
\begin{table}[t]
  \begin{tabular}{c|c|c|c|c}
    $\lambda$ & $10^2$ & $10^3$ & $10^4$ & $10^5$ \\
    \hline
    min. time (s) & 0.086 & 0.036 & 0.025 & 0.034 \\
    avg. time (s) & 0.093 & 0.039 & 0.037 & 0.037 \\
    max. time (s) & 0.108 & 0.044 & 0.041 & 0.040 \\
    \hline
  \end{tabular}
\vspace*{1em}

  \begin{tabular}{c|c|c|c|c}
    $\lambda$ & $10^2$ & $10^3$ & $10^4$ & $10^5$ \\
    \hline
    min. time (s) & 0.034 & 0.036 & 0.031 & 0.031 \\
    avg. time (s) &  0.047 & 0.048 & 0.045 & 0.043 \\
    max. time (s) & 0.053 & 0.052 & 0.048 & 0.050 \\
    \hline
  \end{tabular}
\vspace*{1em}

  \begin{tabular}{c|c|c|c|c}
    $\lambda$ & $10^2$ & $10^3$ & $10^4$ & $10^5$ \\
    \hline
    min. time (s) & 0.033 & 0.047 & 0.045 & 0.041 \\
    avg. time (s) & 0.048 & 0.059 & 0.058 & 0.052 \\
    max. time (s) & 0.053 & 0.071 & 0.064 & 0.060 \\
    \hline
  \end{tabular}
  \caption{Regularization times in seconds for the piston dataset. Soft thresholding (\emph{top}), the proposed method (\emph{middle}), and the heuristic sparsification (\emph{bottom}) are applied to the wavelet coefficients with ten repetitions}
  \label{timing:piston}
\end{table}
\begin{table}[t]
  \begin{tabular}{c|c|c|c|c}
    $\lambda$ & $10^2$ & $10^3$ & $10^4$ & $10^5$ \\
    \hline
    min. time (s) & 0.342 & 0.293 & 0.297 & 0.297 \\
    avg. time (s) & 0.360 & 0.306 & 0.306 & 0.308 \\
    max. time (s) & 0.382 & 0.329 & 0.318 & 0.321 \\
    \hline
  \end{tabular}
\vspace*{1em}

  \begin{tabular}{c|c|c|c|c}
    $\lambda$ & $10^2$ & $10^3$ & $10^4$ & $10^5$ \\
    \hline
    min. time (s) & 0.464 & 0.460 & 0.457 & 0.445 \\
    avg. time (s) & 0.486 & 0.480 & 0.472 & 0.475 \\
    max. time (s) & 0.535 & 0.506 & 0.494 & 0.498 \\
    \hline
  \end{tabular}
\vspace*{1em}

  \begin{tabular}{c|c|c|c|c}
    $\lambda$ & $10^2$ & $10^3$ & $10^4$ & $10^5$ \\
    \hline
    min. time (s) & 0.574 & 0.596 & 0.580 & 0.551 \\
    avg. time (s) & 0.624 & 0.615 & 0.592 & 0.584 \\
    max. time (s) & 0.709 & 0.665 & 0.644 & 0.635 \\
    \hline
  \end{tabular}
  \caption{Regularization times in seconds for the mummy teeth ROI. Soft thresholding (\emph{top}), the proposed method (\emph{middle}), and the heuristic sparsification (\emph{bottom}) are applied to the wavelet coefficients with ten repetitions}
  \label{timing:mummy:roi}
\end{table}
Given Haar-wavelet-transformed data, we now compare the runtimes of a standard wavelet-based regularization technique, soft thresholding, with the proposed method and the heuristic sparsification.
Componentwise soft thresholding with parameter~$\lambda$ modifies a wavelet coefficient~$x\in\RR$ by calculating
\begin{equation}
x \mapsto \textnormal{sgn}(x) \left(|x| - \lambda\right)_+.
\end{equation}
The CPU-only implementation was run on a AMD Ryzen 7 4800H system.
The two datasets to test are the motor piston wavelet coefficients without any manipulation, and the \SI{30}{\giga\byte} mummy wavelet coefficients that remain after zeroing small coefficients (via hard thresholding) that are mentioned in the introduction.
For the piston, the complete dataset of size~${\num{464}\times \num{464}\times \num{414}}$ voxels is compared.
For the mummy, not the whole volume of size~${\num{7584}\times \num{7584}\times \num{9216}}$ voxels, but a region of interest of size~${\num{1024}\times \num{1024}\times \num{1024}}$ containing the teeth similar to Fig.~\ref{fig:mummyTeeth} is considered.
In both cases, regularization time was measured for ten repetitions.
Tables~\ref{timing:piston} and~\ref{timing:mummy:roi} show that, compared to pointwise soft thresholding, the proposed technique needs up to~$50\%$ more computation time, whereas the additional heuristic sparsification may take~$100\%$ more time on average.
However, note that in relation to the calculation of the inverse wavelet transform (around~24 seconds for the mummy teeth ROI), this increase in runtime is negligible and demonstrates that the proposed Haar-wavelet-based approximate TV minimization is a computationally cheap regularization strategy.

\section{Conclusion}

In contrast to computationally expensive TV regularization methods, our
approach only considers an approximation of the TV norm, and does it in an
efficient way by directly shrinking the Haar wavelet coefficients. This renders
TV-like regularization feasible for very large datasets. In two examples,
the performance of our method was discussed and compared to a standard
wavelet-based regularization approach. The results can even be improved by a
computationally cheap and straightforward heuristic modification of the thresholding process.

\backmatter

\bmhead{Acknowledgements}
This work was supported by the German Federal Ministry of Education and Research (BMBF) within the project \emph{KI4D4E: Ein KI-basiertes Framework f\"ur die Visualisierung und Auswertung der massiven Datenmengen der 4D Tomographie f\"ur Endanwender von Beamlines} under the title 05D2022 in collaboration between the Fraunhofer Gesellschaft zur F\"orderung der angewandten Forschung e.V., the University of Stuttgart, the University of Passau, the Friedrich-Alexander University of Erlangen, the Karlsruhe Institute of Technology, the Helmholtz center Berlin, the Helmholtz center Hereon, and the Forschungszentrum J\"ulich.

We thank Christoph Heinzl for providing us with Figures~\ref{fig:mummyFull} and~\ref{fig:mummyTeeth}.

\bmhead{Conflict of interest}
The authors declare that they have no conflict of interest.

\bmhead{ORCID iDs}
\begin{itemize}
	\item Tomas Sauer \url{https://orcid.org/0000-0002-3182-2141}
	\item Andreas Michael Stock \url{https://orcid.org/0000-0003-2969-3990}
\end{itemize}



\begin{thebibliography}{10}

\bibitem{salamon19:_xxl_ct_elect_vehic}
{\sc M.~Salamon, N.~Reims, M.~B{\"o}hnel, K.~Zerbe, M.~Schmitt, N.~Uhlmann, and
  R.~Hanke}, {\em {XXL-CT} capabilities for the inspection of modern electric
  vehicles}, International Symposium on Digital Radiology and Computed
  Tomography -- DIR 2019,  (2019).

\bibitem{mallat09:_wavel_tour_signal_proces}
{\sc S.~Mallat}, {\em A Wavelet Tour of Signal Processing: The Sparse Way},
  Academic Press, 3rd~ed., 2009.

\bibitem{christopoulos2000jpeg2000}
{\sc C.~Christopoulos, A.~Skodras, and T.~Ebrahimi}, {\em The JPEG2000 still
  image coding system: an overview}, IEEE transactions on consumer electronics,
  46.4 (2000), pp.~1103--1127.

\bibitem{sauerEtAl19:_compressed_computing}
{\sc B.~Diederichs, T.~Sauer, A.~M. Stock}, {\em Mathematical aspects of
  computerized tomography: compression and compressed computing}, Fifteenth
  International Conference Zaragoza-Pau on Mathematics and its Applications,
  42 (2019), pp.~79--93.

\bibitem{stockEtAlSauer20:_edge_ct}
{\sc A.~M. Stock, G.~Herl, T.~Sauer, and J.~Hiller}, {\em Edge preserving
  compression of {CT} scans using wavelets}, Insight, 62 (2020), pp.~345--351,
  \url{https://doi.org/10.1784/insi.2020.62.6.345}.

\bibitem{haar10:_zur_theor_funkt}
{\sc A.~Haar}, {\em Zur {T}heorie der orthogonalen {F}unktionensysteme}, Math.
  Ann., 69 (1910), pp.~331--371.

\bibitem{Meyer93}
{\sc Y.~Meyer}, {\em Wavelets -- Algorithms and Applications}, SIAM, 1993.

\bibitem{rudin92:_nonlin}
{\sc L.~I. Rudin, S.~Osher, and S.~Fatemi}, {\em Nonlinear total variation
  based noise removal algorithms}, Physica D, 60 (1992), pp.~259--268.

\bibitem{steidl02:_relat_soft_wavel_shrin_total_variat_denois}
{\sc G.~Steidl and J.~Weickert}, {\em Relations between soft wavelet shrinkage
  and total variation denoising}, in Pattern Recognition. DAGM 2002, L.~V.
  Gool, ed., vol.~2449 of Lecture Notes in Computer Science, Springer, 2002.

\bibitem{welk08:_local}
{\sc M.~Welk, G.~Steidl, and J.~Weickert}, {\em Locally analytic schemes: A
  link between diffusion filtering and wavelet shrinkage}, Appl. Comput.
  Harmon. Anal., 24 (2008), pp.~195--224,
  \url{https://doi.org/10.1016/j.acha.2007.05.004}.

\bibitem{reisenhofer18:_haar_wavel_based_percep_simil}
{\sc R.~Reisenhofer, S.~Bosse, G.~Kutyniok, and T.~Wiegand}, {\em A Haar
  wavelet-based perceptual similarity index for image quality assessment},
  Signal Proc. Image Comm., 61 (2018), pp.~33--43.

\bibitem{cohen99:_nonlin_approx_space_r}
{\sc A.~Cohen, R.~DeVore, P.~Petrushev and H.~Xu}, {\em Nonlinear
  Approximation and the Space $\text{BV} (\mathbb{R}^2)$}, Amer. J. Math,
  Ann., 121 (1999), pp.~587--628.

\bibitem{Lorentz66}
{\sc G.~G. Lorentz}, {\em Approximation of Functions}, Chelsea Publishing
  Company, 1966.

\bibitem{natterer86:_mathem_comput_tomog}
{\sc F.~Natterer}, {\em The Mathematics of Computerized Tomography}, John Wiley \& Sons, 1986.

\bibitem{chambolle10:_introd_total_variat_image_analy}
{\sc A.~Chambolle, V.~Caselles, D.~Cremers, M.~Novaga, and T.~Pock}, {\em An
  introduction to Total Variation for Image Analysis}, in Theoretical
  Foundations and Numerical Methods for Sparse Recovery, M.~Fournasier,
  vol.~9 of Radon Series Comp. Appl. Math., De Gruyter, 2010, pp.~263--340,
  \url{https://doi.org/10.1515/9783110226157.263}.

\bibitem{Rockafellar70}
{\sc R.~T. Rockafellar}, {\em Convex {A}nalysis}, Princeton University Press,
  1970.

\end{thebibliography}

\end{document}